\documentclass[letterpaper, 10pt, dvipsnames]{article}

\usepackage{bold-extra}
\usepackage[utf8]{inputenc}    
\usepackage[T1]{fontenc}
\usepackage[english]{babel}
\usepackage[normalem]{ulem}
\usepackage{dsfont, bm, pifont, mathrsfs}
\usepackage{stmaryrd}
\usepackage{comment}
\usepackage{bbm}
\usepackage{enumitem}
\setcounter{tocdepth}{1}
\usepackage{float, graphicx, caption, wrapfig, tikz}
\usepackage[margin=30pt]{subcaption}

\usepackage{amsthm, amsmath, amsfonts, amssymb, mathrsfs, mathtools}
\usepackage[alphabetic]{amsrefs}

\usepackage{hyperref, xcolor, titlesec} 
\hypersetup{colorlinks = true, urlcolor = RoyalBlue!70!Black,linkcolor=BrickRed, citecolor=PineGreen, bookmarksopen = true}
\usepackage[nomarginpar]{geometry}
\geometry{verbose, lmargin=0.9in, rmargin=0.9in,bmargin=1.25in}
\numberwithin{equation}{section}

\usepackage{cleveref}
\renewcommand{\ge}{\geqslant}
\renewcommand{\le}{\leqslant}


\newcommand{\nc}{\normalcolor}

\let \d \relax
\newcommand{\d}{\mathrm{d}}
\newcommand{\I}{\mathrm{i}}
\newcommand{\scp}[2]{\langle #1,#2\rangle}

\let\O\relax
\newcommand{\O}[1]{\mathcal{O}\left(#1\right)}
\newcommand{\E}[1]{\mathds{E}\left[#1\right]}
\newcommand{\EL}[1]{\mathds{E}\left[#1\middle\vert\bm{\lambda}\right]}

\newcommand{\unn}[2]{[\![#1,#2]\!]}
\let\Im\relax
\DeclareMathOperator{\Im}{Im}
\let\Re\relax
\DeclareMathOperator{\Re}{Re}
\DeclareMathOperator{\Tr}{Tr}


\renewcommand{\epsilon}{\varepsilon}
\renewcommand{\tilde}{\widetilde}

\def\bet{\begin{thm}}
\def\eet{\end{thm}}
\def\bel{\begin{lem}}
\def\eel{\end{lem}}
\def\bas{\begin{ass}}
\def\eas{\end{ass}}
\def\bec{\begin{cor}}
\def\eec{\end{cor}}
\def\bed{\begin{defn}}
\def\eed{\end{defn}}
\def\bep{\begin{prop}}
\def\eep{\end{prop}}
\def\beq{\begin{equation}}
\def\eeq{\end{equation}}
\def\bea{\begin{equation*}}
\def\eea{\end{equation*}}
\def\bex{\begin{ex}}
\def\eex{\end{ex}}
\def\bp{\begin{proof}}
\def\ep{\end{proof}}

\def\1{{\mathbbm 1}}

\def\benr{\begin{enumerate}[label=(\roman*)]}
\def\eenr{\end{enumerate}}

\def\N{\mathbb{N}}

\def\R{\mathbb{R}}
\def\C{\mathbb{C}}

\def\E{\mathbb{E}}

\newcommand\x{\mathbf{x}}
\newcommand\y{\mathbf{y}}

\renewcommand{\u}{\mathbf{u}}

\newcommand{\bma}{\begin{bmatrix}}
\newcommand{\ema}{\end{bmatrix}}

\def\phi{\varphi}

\renewcommand{\hat}{\widehat}

\newtheorem{ccounter}{ccounter}[section]
\newtheorem{thm}[ccounter]{Theorem}
\newtheorem{lem}[ccounter]{Lemma}
\newtheorem{cor}[ccounter]{Corollary}
\newtheorem{defn}[ccounter]{Definition}
\newtheorem{prop}[ccounter]{Proposition}
\newtheorem{ass}[ccounter]{Assumption}
\newtheorem{ex}[ccounter]{Example}

\theoremstyle{definition}
\newtheorem{rmk}[ccounter]{Remark}


\titleformat{\paragraph}[runin]{\itshape\normalsize}{\theparagraph}{}{}
\titleformat{\subparagraph}[runin]{\itshape\normalsize}{\theparagraph}{0em}{}
\titleformat{\section}[block]{\normalfont\filcenter}{\Large\thesection .}{.7em}{\Large\scshape}
\titleformat{\subsection}[runin]{\normalfont}{\large \bf \thesubsection .}{.5em}{\large\bf}[.]
\titleformat{\subsubsection}[runin]{\normalfont}{\bf \thesubsubsection .}{.5em}{\bf}[.]

\usepackage{tocloft}

\begin{document}

\title{\scshape\bfseries{Fluctuations of eigenvector overlaps and the Berry conjecture for Wigner matrices}}
\date{}
\author{L. \textsc{Benigni}\\\vspace{-0.15cm}\footnotesize{\it{Université de Montréal}}\\\footnotesize{\it{lucas.benigni@umontreal.ca}}\and G. \textsc{Cipolloni}\\\vspace{-0.15cm}\footnotesize{\it{Princeton University}}\\\footnotesize{\it{gc4233@princeton.edu}}}
\maketitle

\begin{abstract}
We prove that any finite collection of quadratic forms (overlaps) of general deterministic matrices and eigenvectors of an $N\times N$ Wigner matrix has joint Gaussian fluctuations. This can be viewed as the random matrix analogue of the Berry random wave conjecture.
\end{abstract}

\section{Introduction}
\tikzset{every node/.style={circle, inner sep = .2em}}

The groundbreaking discovery of Wigner \cite{wigner1957report} was that gap statistics of energy levels of heavy nuclei are universal and can be modeled by eigenvalue statistics
of large random Hermitian matrices called \emph{Wigner matrices}. We recall that such matrices are $N\times N$ Hermitian matrices $W^*=W$ with centered i.i.d. entries
(modulo symmetry) of finite variance. While the analysis of eigenvectors were not considered in Wigner's original study, they were examined by Porter and Thomas \cite{porter1956fluctuations}
to understand nuclear reaction widths. In general, $W$ is a tractable mathematical model for a Hamiltonian of a disordered quantum system and in particular, its eigenvectors, as stationary states of the system, are important quantities to understand. 

Let $\{\u_i\}_{i=1}^N$ denote the $\ell^2$-normalized eigenvectors of $W$. Several properties of $\u_i$ can be examined to better understand disordered quantum systems. \emph{Delocalization} consists in bounding their individual entries and it can be shown \cites{erdos2009semicircle,erdos2009local,vu2015random,benigni2022optimal} that $\sqrt{N}\Vert \u_i\Vert_\infty \leqslant C\sqrt{\log N}$ with very high probability\footnote{ We say that an event $\Omega$ holds with very high probability if $\mathbb{P}(\Omega^c)\le N^{-D}$ for any $D>0$.}. Proving a \emph{Porter--Thomas law} consists in showing the convergence of $\sqrt{2N}\u_i(\alpha)$ to a standard normal variable; this has been proved in the series of work \cites{knowles2013eigenvector, tao2012random, bourgade2013eigenvector, marcinek2020high}. For a deterministic \emph{traceless} matrix $A$, i.e. $\mathrm{Tr}A=0$, the \emph{Eigenstate Thermalization Hypothesis} (ETH), introduced in \cite{deutsch2018eigenstate}, consists in examining the quadratic form $\scp{\u_i}{A\u_j}$ and proving that with very high probability and any $\xi>0$,
\beq\label{eq:eth}
\left\vert \scp{\u_i}{A\u_j}\right\vert\leqslant N^\xi \sqrt{\frac{\langle \vert A \vert^2\rangle}{N}}.
\eeq
Here $\langle \cdot \rangle:= N^{-1}\mathrm{Tr}[\cdot]$ denotes the normalized trace. For Wigner matrices the ETH conjecture was proven in \cite{cipolloni2022rank}. Partial previous results for specific observables $A$ were given in \cites{bourgade2013eigenvector, bourgade2018random, cipolloni2021eigenstate, benigni2022fluctuations}. 

The universal properties of Wigner matrices for truly interacting quantum systems are yet to be proved, but it is conjectured to be true for many different models. For instance, in quantum chaos theory, the celebrated Bohigas--Giannoni--Schmit conjecture \cite{bohigas1984characterization} states that for a classically chaotic quantum billiard, eigenvalue statistics of the Laplacian should be given by random matrix statistics.  The comparison does not stop at eigenvalues since the ETH can also be compared with the Quantum Unique Ergodicity (QUE) conjecture introduced by Rudnick and Sarnak \cite{rudnick1994behaviour} which states that if $\{\psi_k\}_{k\ge 1}$ is an orthonormal basis of eigenfunctions (with increasing eigenvalues) of the Laplace--Beltrami operator on a negatively curved manifold $(\mathcal{M},\mu)$ then for any open set $A\subset \mathcal{M}$
\[
\int_A \vert \psi_k(x)\vert^2\mu(\mathrm{d}x)\xrightarrow[k\to\infty]{}\mu(A).
\]
Quantum ergodicity, which proves this convergence along a subsequence, has been proved in greater generality in \cites{shnirel1974ergodic, zelditch1987uniform, de1985ergodicite, anantharaman2019quantum}, while QUE has only been proved for some arithmetic surfaces in \cites{lindenstrauss2006invariant, holowinsky2010mass, holowinsky2010sieving} using advanced tools from number theory and ergodic theory. For negatively curved manifolds, Berry's \emph{random wave conjecture} from \cite{berry1977regular} asserts that the wave functions $\psi_i$, as $i\to\infty$, tend to exhibit Gaussian random behavior. In particular, they should locally ``\emph{converge}'' to the \emph{isotropic Gaussian field}, a centered stationary Gaussian field $\Psi$ on $\mathbb{R}^d$ with covariance
\[
\mathds{E}\left[
	\Psi(x)\Psi(y)
\right] = \int_{\mathds{S}^{d-1}}\mathrm{e}^{\mathrm{i}(x-y)\cdot \theta}\mathrm{d}\theta.
\] 
Recently, \cites{ingremeau2021local, abert2018eigenfunctions} gave a rigorous statement of the Berry conjecture in this setting.

\begin{figure}[!ht]
	\centering
	\includegraphics[width =.49\linewidth]{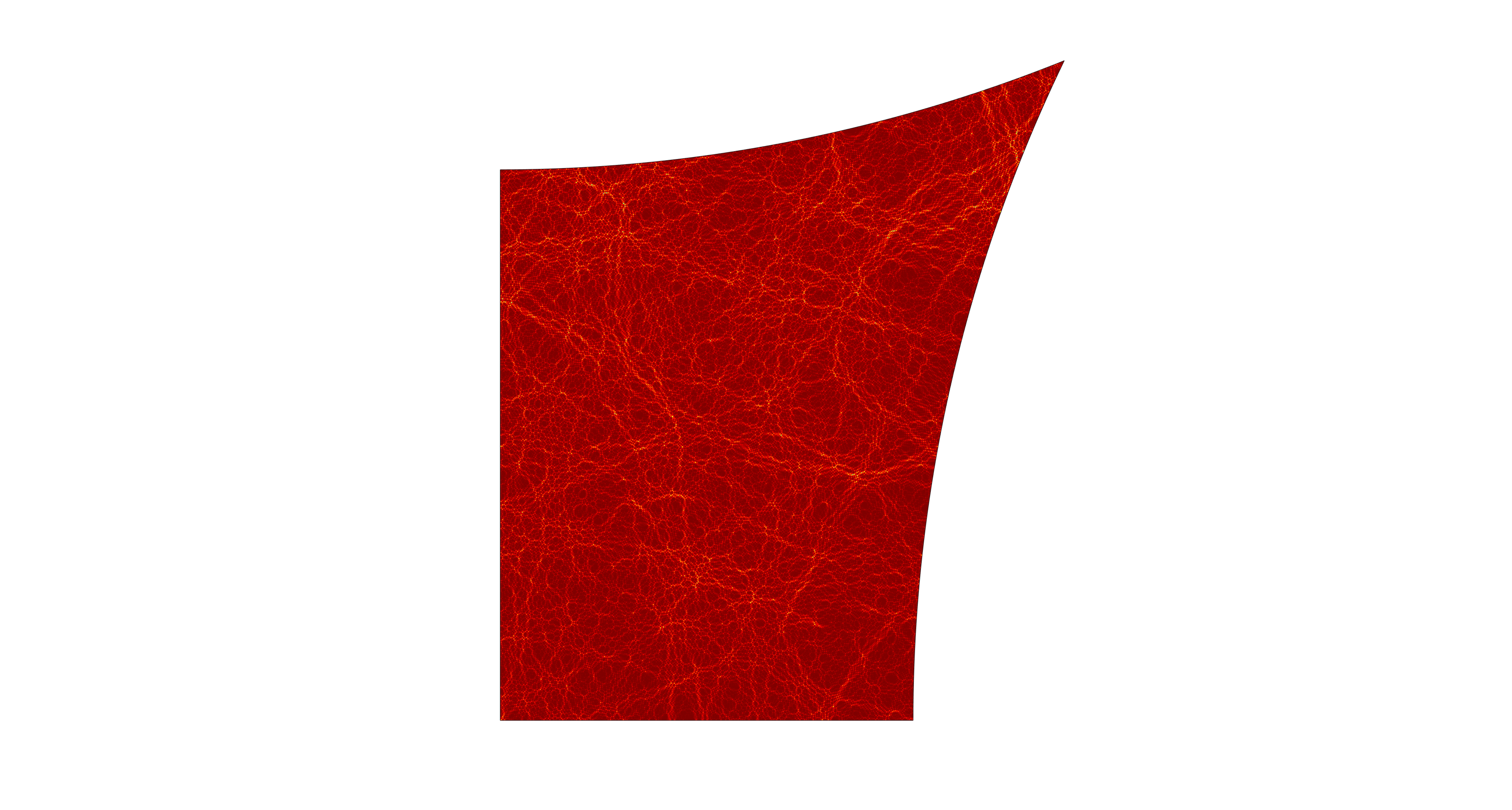}
	\includegraphics[width =.49\linewidth]{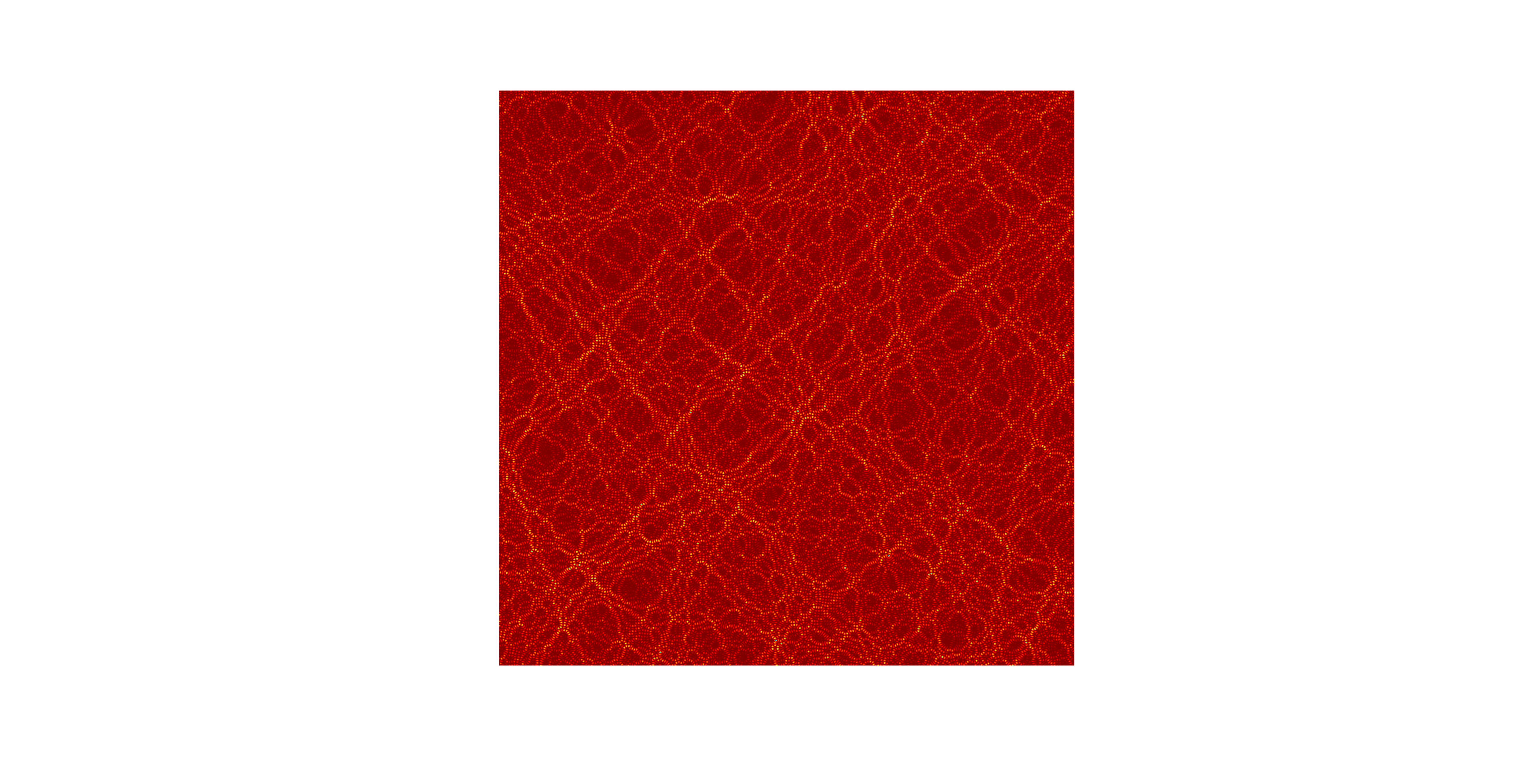}
	\caption{High energy eigenfunction of the Laplacian on the ``Barnett stadium'' which is conjectured to be quantum unique ergodic and the random plane wave on a square.}
\end{figure}

In this article, we study joint fluctuations of the overlaps $\scp{\u_i}{A\u_j}$. For simplicity, consider $A$ and $B$ to be two deterministic \emph{traceless} Hermitian matrices. The first results in this direction are in \cite{benigni2021fermionic} where it was shown that for $A=\sum_{\alpha\in I} | \mathbf{q}_\alpha\rangle\langle \mathbf{q}_\alpha| - \vert I\vert/N$, with $(\mathbf{q}_\alpha)$ being an orthonormal family of vectors in $\ell^2$ and $1\ll \vert I\vert\ll N$,
\[
\mathds{E}\left[\frac{N^2}{\vert I\vert}\scp{\u_i}{A\u_i}\scp{\u_j}{A\u_j}\right]\ll 1
\quad\text{and}\quad \mathds{E}\left[\frac{N^2}{\vert I\vert}\scp{\u_i}{A\u_j}^2\right]\xrightarrow[N\to\infty]{}{1}.
\]
The main difficulty for this result at the time was to understand correlations between different entries of different eigenvectors and was handled by introducing new moment observables along the Dyson Brownian motion.  For the same specific family of observables, and introducing again other symmetrized observables, convergence of the diagonal overlap to the Gaussian distribution was proved in \cite{benigni2022fluctuations} for all eigenvectors (including the edge). Simultaneously, for general full-rank observables $A$ and indices in the bulk, the same convergence was proved in \cite{cipolloni2022normal} using multi-resolvent local laws proved in \cite{cipolloni2021eigenstate},
\[
\sqrt{\frac{N}{\langle \vert A\vert^2\rangle}}\scp{\u_i}{A\u_i}\xrightarrow[N\to\infty]{}\mathcal{N}(0,1).
\]
This convergence was then extended to all observables $A$ such that $A$ has effective rank $\gg 1$ in \cite{cipolloni2022rank} by refining these multi-resolvent local laws. Note that the condition is necessary to obtain some averaging in light of the central limit theorem making the result optimal. Our goal in this article is to obtain similar convergence, not just for the diagonal overlaps, but jointly for any finite collections of eigenvectors and observables $A$ using the machinery set up developed in \cite{marcinek2020high} and \cite{cipolloni2022normal}.

We prove that $(\langle \u_i,A \u_j\rangle)_{i,j,A}$, for bulk indices $i,j$ and deterministic matrices $A$, forms a Gaussian field with covariance structure given by (see Theorem~\ref{theo:main} below)

\begin{equation}
	\label{eq:covstruc}
	\E \left[ N \langle \u_i,A \u_j\rangle\langle \u_k,B \u_l\rangle\right]\approx 
	 \left[\delta_{kj}\delta_{li}+\left(\frac{2}{\beta}-1\right)\delta_{ki}\delta_{lj}\right] \langle AB\rangle,
\end{equation}
where $\beta$ is the Dyson index such that $\beta=1$ in the symmetric case and $\beta=2$ in the Hermitian case. In particular, note that $\langle \u_i,A \u_i\rangle,  \langle \u_j,B \u_j\rangle$ are asymptotically independent for any $j\ne i$ and, even more interestingly, that  $\langle \u_i,A \u_i\rangle,  \langle \u_i,B \u_i\rangle$ are also asymptotically independent as long as $\langle AB\rangle \ll \sqrt{\langle A^2\rangle\langle B^2\rangle}$. The limiting statement in \eqref{eq:covstruc} can be easily checked for $W$ drawn from the GOE/GUE ensembles by Weingarten calculus as the eigenbasis of these matrices are distributed according to the Haar measure on the orthogonal or unitary group, respectively. Thus \eqref{eq:covstruc} can be seen as a universality statement for finite collection of eigenvectors of Wigner matrices: any finite number of eigenvectors $\u_1,\dots,\u_k$ behaves as if they were Haar distributed on the corresponding compact group. While the covariance structure is different due to eigenvectors being close to Haar distributed on unitary or othgonal group, our result proves joint Gaussian fluctuations around the QUE property of eigenvectors and thus can be understood as an equivalent to the Berry conjecture in the context of random matrices.

Our result, and many mentioned above, rely on a careful analysis of the Dyson vector flow, the flow of eigenvectors induced by the Dyson Brownian motion (DBM). This flow was first computed in \cite{bru1989diffusions} and first used in this setting in the seminal paper \cite{bourgade2013eigenvector} where it was used to compute asymptotic moments of individual eigenvector entries. The dynamical method to prove universality results as introduced in the groundbreaking paper \cite{erdos2011universality} consists in three steps. The first step establishes an (almost) optimal a priori bound for the quantity of interest and the second step use the DBM, combined with our a priori bound, to prove our joint fluctuations of eigenvectors along the dynamics. Finally, the third step is based on a comparison step between the dynamical model and Wigner matrices.

We now briefly describe the first and second steps; the third step is performed by an application of the by now standard \emph{Green's function comparison theorem} (see Appendix~\ref{sec:GFT} below). For our result, the a priori bound is the ETH \eqref{eq:eth} obtained in \cites{cipolloni2021eigenstate, cipolloni2022rank}.
The proof of \eqref{eq:eth} relies on the observation that
\[
\mathrm{Av}_{ij} N\big|\langle \u_i,{A} \u_j\rangle\big|^2\sim \frac{1}{N}\mathrm{Tr}[\Im G(z){A}\Im G(z'){A}],
\]
with $G(z):=(W-z)^{-1}$ denoting the resolvent of $W$ and $z,z'$ are possibly different spectral parameters properly chosen with $\Im z=\Im z'=N^{-1+\epsilon}$. Here, $\mathrm{Av}_{ij}$ denotes an averaging over $N^\epsilon$ indices. Then, to conclude \eqref{eq:eth}, it thus remains to obtain the desired upper bound for $\mathrm{Tr}[\Im G\mathring{A}\Im G' \mathring{A}]$ which can be handled by proving a very delicate \emph{multi--resolvent} local law; in particular, in \cites{cipolloni2021eigenstate, cipolloni2022rank, cipolloni2022optimal} it was proven that
\begin{equation}
\label{eq:bound2g2a}
\frac{1}{N}\mathrm{Tr}[\Im G{A}\Im G' {A}]\le \langle {A}^2\rangle.
\end{equation}
The fundamental observation to obtain \eqref{eq:bound2g2a} is that the resolvent $G$ is much smaller when tested against matrices with zero trace.

Given the a priori bound \eqref{eq:eth} as an input, to prove Theorem~\ref{theo:main}, we compute arbitrary moments of several eigenvector overlaps $\langle \u_i,A \u_j\rangle$. Following Marcinek--Yau \cite{marcinek2020high} one can see that arbitrary moments of eigenvectors overlaps, which are encoded by
\begin{equation}
\label{eq:ourobs}
f_{\bm{\lambda},t}({\bf x},{\bf A}):= f(\x)= \E\left[ \prod_{i=1}^{n} \langle {\bf u}_{x_{2i-1}},A_i {\bf u}_{x_{2i}}\rangle\Big|\bm{ \lambda}\right],
\end{equation}
 evolve according to the \emph{colored} eigenvector moment flow (see \eqref{eq:maineqob}--\eqref{eq:moveexchange} below). Then to study relaxation properties of this flow, we rely on energy estimates for multi--indexed DBM developed in \cite{marcinek2020high} for finite rank $A$'s, and extended to the analysis of general observables $A$ in \cite{cipolloni2022normal}. The observable considered in \cite{cipolloni2022normal} is different compared to \eqref{eq:ourobs} since the authors studied the evolution of \emph{perfect matching observables} introduced in \cite{bourgade2018random}, which enabled to obtain a central limit theorem only for diagonal overlaps. The main novelty in the present article is that the deterministic approximation of $f(\x)$ depends on the configuration $\x$ and $f$ follows the \emph{colored} eigenvector moment flow while the perfect matching observable evolve according to the original one from \cite{bourgade2013eigenvector}. The \emph{colored} version from \cite{marcinek2020high} loses its parabolicity due to the exchange term $\mathscr{E}$ in \eqref{eq:moveexchange}. In particular, we need to introduce the correct \emph{ansatz observable} $F(\x)$, the deterministic limit of $f(\x),$ and prove that it is stationary along the dynamics which is done in Subsection \ref{sub:ansatz}.

\section{Main result}

We consider $N\times N$ Wigner matrices, i.e. real symmetric or complex Hermitian matrices $W=W^*$ with entries distributed as $w_{ab}\stackrel{\d}{=}n^{-1/2}\chi_\mathrm{od}$, for $a>b$, and $w_{aa}\stackrel{\d}{=}n^{-1/2}\chi_\d$. On the random variables $\chi_\d,\chi_\mathrm{od}$ we assume:

\begin{ass}
\label{ass:momass}
$\chi_\mathrm{od} $ is a real or complex random variable with $\E \chi_\mathrm{od}=0$, $\E|\chi_\mathrm{od}|^2=1$; additionally, in the complex case we also assume that $\E \chi_\mathrm{od}^2=0$. $\chi_\d$ is a real random variable with $\E\chi_\d=0$.

Furthermore, we assume that all the moments of $\chi_\d, \chi_\mathrm{od}$ exist, i.e. that for any $p\in\N$ there exist constants $C_p>0$ such that
\begin{equation}
\label{eq:momass}
\E|\chi_{\mathrm{od}}|^p+\E|\chi_\d|^p\le C_p.
\end{equation}
\end{ass}

By $\lambda_1\le \dots\le \lambda_N$ we denote the eigenvalues of $W$, and by $\{\u_i\}_{i\in [N]}$ the corresponding orthonormal eigenvectors. Since the eigenvectors are defined only modulo a phase, we assume that the eigenvectors are always multiplied by a uniformly random phase $e^{\mathrm{i}\theta_i}\u_i$ where $\theta_i \sim \mathrm{Unif}([0,2\pi])$ in the Hermitian case and $\theta_i\sim\mathrm{Unif}(\{0,\pi\})$ in the symmetric case; this choice will simply the statement of our main result in Theorem~\ref{theo:main} below. 
 To make the notation more concise, given a deterministic matrix $A\in\C^{N\times N},$ we denote its \emph{traceless} part by $\mathring{A}:=A-\langle A\rangle$ where $\langle A\rangle =\frac{1}{N}\Tr A$.
  From now on we only consider the case when $A$ is a real symmetric or a complex Hermitian matrices, the general case is discussed below Theorem~\ref{theo:main}.

Before stating our main result we introduce the following rescaled quantity:
\begin{equation}
\label{eq:defphi}
\Phi_N(A,i,j):=\sqrt{\frac{N}{\langle \mathring{A}^2\rangle}}\big[\langle  e^{\mathrm{i}\theta_i}   \u_i, A  e^{\mathrm{i}\theta_j}  \u_j\rangle-\langle A\rangle\delta_{ij}\big].
\end{equation}

\begin{thm}
\label{theo:main}
Let $W$ be a Wigner matrix satisfying Assumption~\ref{ass:momass}. Fix any $p\in\N$, any small $\delta,\delta'>0$, and any real symmetric or complex Hermitian deterministic matrices $A_1,\dots, A_p$ with $\lVert A_i\rVert\le 1$, $\langle \mathring{A_i}^2\rangle\ge N^{-1+\delta'}$. Then, for any $i_1,\dots, i_{2p}\in \unn{\delta N}{(1-\delta)N}$, the collection $(\Phi_N(A_1,i_1,i_2),\dots, \Phi_N(A_p,i_{2p-1},i_{2p}))$ is approximated in the sense of moments\footnote{ For two possilby $N$-dependent random variables $X_N$, $Y_N$ we say that $X_N$ is approximated in the sense of moments by $Y_N$ if $\E |X_N|^k=\E |Y_N|^k+\mathcal{O}(N^{-c(k)})$, with $c(k)>0$ a small constant depending on $k$.} by a Gaussian field
\begin{equation}
\label{eq:convmult}
 \Big(\Phi(A_1,i_1,i_2),\dots, \Phi(A_p,i_{2p-1},i_{2p})\Big),
\end{equation}
with covariance structure
\begin{equation}
\E \Phi(A,i,j)\Phi(B,k,l)= \left[\delta_{kj}\delta_{li}+\left(\frac{2}{\beta}-1\right)\delta_{ki}\delta_{lj}\right]\frac{\langle \mathring{A}\mathring{B}\rangle}{\sqrt{\langle \mathring{A}^2\rangle\langle \mathring{B}^2\rangle}}.
\end{equation}
\end{thm}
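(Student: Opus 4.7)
The plan is to prove Theorem~\ref{theo:main} by the moment method, showing that the mixed moments of $\Phi_N(A_k,i_{2k-1},i_{2k})$ match those predicted by Wick's theorem applied to the Gaussian field with the prescribed covariance. By the Green's function comparison theorem (Appendix~\ref{sec:GFT}), it suffices to prove the statement for a Wigner matrix $W_t = e^{-t/2}W_0 + \sqrt{1-e^{-t}}\,G$ with a small Gaussian component $G$ of size $t = N^{-1+\omega}$ for some small $\omega>0$. The a priori bound \eqref{eq:eth} established in \cite{cipolloni2022rank} gives us control of size $N^{\xi}$ on each individual overlap $\sqrt{N/\langle \mathring A_k^2\rangle}\langle \u_{i_{2k-1}}, A_k \u_{i_{2k}}\rangle$, which is the starting input.

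Following the strategy of \cite{marcinek2020high} and \cite{cipolloni2022normal}, I would track the conditional moments
\[
f_{\bm\lambda, t}(\x,\mathbf{A}) = \E\!\left[\prod_{k=1}^n \langle \u_{x_{2k-1}},A_k \u_{x_{2k}}\rangle \,\Big|\, \bm\lambda\right]
\]
along the Dyson vector flow driven by the DBM. A direct Itô computation yields the \emph{colored eigenvector moment flow} \eqref{eq:maineqob}--\eqref{eq:moveexchange}: a short-range elliptic part with Bourgade--Yau type generator plus an exchange operator $\mathscr E$ that permutes the colors $A_k$ attached to the indices. The first key step is to identify the correct \emph{ansatz} $F(\x,\mathbf{A})$, obtained by summing over color-respecting perfect matchings of $\{1,\dots,2n\}$ with weights $\langle \mathring A_{k_1}\mathring A_{k_2}\rangle/N$, accounting for both the Hermitian ($\beta=2$) and symmetric ($\beta=1$) Wick structures in \eqref{eq:covstruc}. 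The task in Subsection~\ref{sub:ansatz} is to verify that this $F$ is stationary under the full colored generator: the elliptic part vanishes by the combinatorial identity used in Bourgade--Yau, while the exchange term $\mathscr E$ cancels precisely because the ansatz is invariant under permuting colors along matched pairs.

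The main analytic step is to control $g(\x):=f(\x)-F(\x)$ via energy estimates on the colored flow. I would set up a weighted $L^2$ norm adapted to the exchange structure and derive a Grönwall-type inequality showing that the $L^\infty$ norm of $g$ decays from its a priori size down to $N^{-c}$ on the time scale $t\gg N^{-1+\omega}$; this requires combining Sobolev-type estimates from the maximum principle argument of \cite{marcinek2020high} with the more delicate parabolic regularization of \cite{cipolloni2022normal} to handle general (full-rank, traceless) observables. As in \cite{cipolloni2022normal}, the ETH bound \eqref{eq:eth} serves as the essential a priori input to close the estimate, so the analysis must track carefully how powers of $N^\xi$ propagate through the moment hierarchy.

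\textbf{Main obstacle.} The principal difficulty, relative to previous work, is precisely the loss of parabolicity caused by the exchange term $\mathscr E$: unlike the scalar eigenvector moment flow of \cite{bourgade2013eigenvector} or the perfect matching flow of \cite{bourgade2018random}, the deterministic profile $F(\x)$ genuinely depends on the configuration $\x$ through the color structure, so one cannot simply factor it out. This forces a simultaneous treatment of the algebraic ansatz (proving $\mathscr E F=0$ via the permutation symmetries of Wick contractions) and the analytic short-range estimates. Once the stationarity of $F$ and the decay of $g$ along the DBM are established for $W_t$, the final GFT step transfers the moment asymptotics to the original Wigner matrix $W$, yielding the Gaussian limit with the covariance \eqref{eq:covstruc}.
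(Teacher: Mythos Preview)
Your proposal is correct and follows essentially the same three-step strategy as the paper: the a priori ETH bound from \cite{cipolloni2022rank}, the colored eigenvector moment flow analysis of \cite{marcinek2020high} combined with the energy methods of \cite{cipolloni2022normal} to show $f_t(\x)\to F(\x)$, and finally the GFT of Appendix~\ref{sec:GFT} to remove the Gaussian component.

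One point of difference worth flagging concerns the stationarity of the ansatz $F$. You propose to verify $\mathscr{M}F=0$ and $\mathscr{E}F=0$ separately, arguing that the exchange term cancels ``because the ansatz is invariant under permuting colors along matched pairs.'' This reasoning is somewhat imprecise: with genuinely different observables $A_1,\dots,A_n$, the ansatz is \emph{not} invariant under arbitrary color swaps, and in general neither $\mathscr{M}F$ nor $\mathscr{E}F$ vanishes on its own---only the combination $\mathscr{L}F=(\mathscr{M}-\mathscr{E})F$ does. The paper instead invokes the explicit description of $\bigcap_{i<j}\mathrm{Ker}\,\mathscr{L}_{ij}$ from \cite{marcinek2020high}*{Corollary 6.28} as the span of the indicators $\chi_\Pi(\x)=\mathds{1}_{\Pi\in\mathrm{Stab}(\x)}/\sqrt{\pi(\x)}$, and then rewrites $F$ as a linear combination of these $\chi_\Pi$ by decomposing the graph into edge-blocks and identifying the perfect matchings on edges with elements of $\mathcal{G}\text{-}\mathrm{Stab}(\x)\subset\mathrm{Stab}(\x)$ (Lemma~\ref{lem:ansatz}). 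This structural argument avoids any direct computation of $\mathscr{M}F$ or $\mathscr{E}F$ and is both cleaner and more robust than the verification you sketch.
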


The assumption $\langle \mathring{A_i}^2\rangle\ge N^{-1+\delta'}$ in Theorem~\ref{theo:main} is not a restriction since it ensures that the the observables $A_i$ are not finite rank, and for finite rank matrices \eqref{eq:convmult} does not hold; the behavior of finitely many entries has been studied in \cites{bourgade2013eigenvector, marcinek2020high}. Furthermore, in Theorem~\ref{theo:main} we stated the convergence \eqref{eq:convmult} only for real symmetric or complex Hermitian matrices $A$. The general case immediately follows by polarization and by the fact that any matrix can be written as the sum of symmetric or Hermitian matrices.

\begin{rmk}[Special cases of Theorem~\ref{theo:main}]

We now specialize Theorem~\ref{theo:main} to some simple relevant cases:

\begin{itemize}

\item[(i)] By \eqref{eq:convmult} for $p=1$ and $i\ne j$ it follows that (here $\mathbb{F}=\R$ for $\beta=1$ and $\mathbb{F}=\C$ for $\beta=2$)
\[
\sqrt{\frac{ N}{\langle \mathring{A}^2\rangle}}\big|\langle  \u_i, A   \u_j\rangle\big|\Longrightarrow \big|\mathcal{N}_\mathbb{F}(0,1)\big|.
\]

\item[(ii)]  By \eqref{eq:convmult} for $p=2$ it follows that, after the proper rescaling, $\langle \u_i,A\u_i\rangle, \langle \u_j,A\u_j\rangle$ are asymptotically independent for any $i\ne j$. Interestingly, we find out that $\langle \u_i,A\u_i\rangle, \langle \u_i,B\u_i\rangle$ are also asymptotically independent as long as
\[
\langle \mathring{A}\mathring{B}\rangle \ll \sqrt{\langle \mathring{A}^2\rangle \langle \mathring{B}^2\rangle}.
\]

\end{itemize}

\end{rmk}

\section{Short-time Relaxation}
In this section we analyze the mixed moments of \eqref{eq:defphi} for different indices and observable matrices through the matrix Dyson Brownian motion. This dynamics is defined as 
\begin{equation}\label{eq:DBM}
	\d W_t=\frac{\d B_t}{\sqrt{N}},\qquad W_0=W,
\end{equation}
and the eigenvalues and eigenvectors of $W_t$ are the solutions to a system of couple stochastic differential equations \cite{bourgade2013eigenvector}*{Definition 2.2}.
It was observed in the seminal paper \cite{bourgade2013eigenvector} that  moments of eigenvectors follow a dynamics which can be represented as a random walk in a random environment given by the eigenvalue process of $W_t$. We now introduce the set of observables we will analyze which follow the \emph{colored} eigenvector moment flow, a variant of the process from \cite{bourgade2013eigenvector}, introduced in \cite{marcinek2020high}. 

Consider $\Lambda^n\subset [N]^{2n}$ be defined as
\[
\Lambda^n:=\left\{ {\bf x}\in[N]^{2n}\,:\, n_i({\bf x})\,\, \mathrm{even}\,\, \forall \, i\in [N]\right\}.
\]
For ${\bf x}\in[N]^{2n}$ we denote its entries by $x_i$, with $i\in [2n]$.

\begin{figure}[!ht]
	\centering
	\begin{tikzpicture}[scale = 1.2]
		\draw[-,black] (-1,0) -- (4,0);
		\node[draw,black,circle,fill=white] at (0,0) {};  
		\node[draw,black,circle,fill=black,label={below: \nc$j$}] (1) at (.5,0) {};
		\node[draw,black,circle,fill=black] (2) at (.5,0.3) {};
		\node[draw,black,circle,fill=black] (3) at (.5,.6) {}; 
		\node[draw,black,circle,fill=black] (4) at (.5,.9) {};
		\node[draw,black,circle,fill=white] at (1,0) {};
		\node[draw,black,circle,fill=black,label=below: \nc$k$] (5) at (1.5,0) {};
		\node[draw,black,circle,fill=black] (6) at (1.5,0.3) {};
		\node[draw,black,circle,fill=black,label=below: \nc $\ell$] (7) at (2,0) {};
		\node[draw,black,circle,fill=black] (8) at (2,.3) {};
		\node[draw,black,circle,fill=white] at (2.5,0) {};
		\node[draw,black,circle,fill=white] at (3,0) {};
	\end{tikzpicture}
	\caption{A visualization of the vertices of the configuration $\x = (j,j,j,j,k,\ell,k,\ell)$ corresponding to the conditional moment of $\langle \u_{j},A_1\u_j\rangle\langle \u_j,A_2\u_j\rangle\langle \u_k,A_3\u_\ell\rangle\langle \u_k,A_4\u_\ell\rangle$.}
	\label{fig:config}
\end{figure}
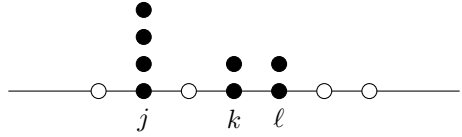

\bed For a given family of traceless symmetric matrices $\mathbf{A}=(A_i)_{1\leqslant i\leqslant n}$, we define
\begin{equation}
\label{eq:observ}
f_{\bm{\lambda},t}({\bf x},{\bf A}):= f(\x)= \E\left[ \prod_{i=1}^{n} \langle {\bf u}_{x_{2i-1}},A_i {\bf u}_{x_{2i}}\rangle\Big|\bm{ \lambda}\right]
\end{equation}
where the conditioning is on the whole path of eigenvalues from $0$ to $1$.
\eed

This observable follow the colored eigenvector moment flow from \cite{marcinek2020high}.

\bet[\cite{marcinek2020high}*{Theorem 4.8}] It follows that
\begin{equation}
\label{eq:maineqob}
\partial_t f_t({\bf x})=\sum_{1\le i< j\le N}c_{ij}(t)\mathscr{L}_{ij}f_t({\bf x}), \qquad  c_{ij}(t):= \frac{1}{N(\lambda_i-\lambda_j)^2},
\end{equation}
where $\mathscr{L}_{ij}=\mathscr{M}_{ij}-\mathscr{E}_{ij}$, with
\begin{equation}
\begin{split}
\label{eq:moveexchange}
\mathscr{M}_{ij}f({\bf x})&=\frac{n_j({\bf x})+1}{n_i({\bf x})-1}\sum_{a\neq b}\left(f(m^{ij}_{ab}{\bf x})-f({\bf x})\right)+\frac{n_i({\bf x})+1}{n_j({\bf x})-1}\sum_{a\neq b}\left(f(m^{ji}_{ab}{\bf x})-f({\bf x})\right) \\
\mathscr{E}_{ij}f({\bf x})&=\sum_{a\neq b}\left(f(s_{ab}^{ij}{\bf x})-f({\bf x})\right).
\end{split}
\end{equation}
Here we used the defnition
\begin{equation}
m_{ab}^{ij}{\bf x}={\bf x}+\bm1_{x_a=x_b=i}(j-i)({\bf e}_a+{\bf e}_b), \qquad s_{ab}^{ij}{\bf x}={\bf x}+\bm1_{x_a=i}\bm1_{x_b=j}(j-i)({\bf e}_a-{\bf e}_b).
\end{equation}
Besides, the reversible measure for the generator $\mathscr{L}$ is given by
\[
\pi(\x)=\prod_{i=1}^N \left[(n_i(\x)-1)!!\right]^2
\]
\eet
\subsection{Ansatz Observable}\label{sub:ansatz}
We now define the observable $F(\x,\mathbf{A})$ which corresponds to the asymptotic deterministic behavior of $f(\x,\mathbf{A}).$ It is the analogue of the \emph{anzats observable} from \cite{marcinek2020high}. In particular since we want to show that the family of observables of the form $\scp{\u_i}{A_j\u_k}$ form a Gaussian process with a specific covariance structure, we are going to use Wick's theorem to describe $F(\x)$.

From \eqref{eq:observ}, we see that the configuration of particles is not enough to describe $f(\x,\mathbf{A})$ since we need to add an ordering of the particles for it to be well defined. This can correspond to performing a perfect matching on the configuration $\x$ (note that we consider configurations such that $n_i(\x)$ is even for all $i$). Thus, we can write $f(\x,\mathbf{A})=f(\mathcal{G},\mathbf{A})$ where $\mathcal{G}=(\mathcal{V},\mathcal{E})$. with $\mathcal{V}=\{x_i\}$ and $\mathcal{E}=\{\{x_{2i-1},x_{2i}\}\}$. Thus we rewrite,
\beq\label{eq:graph}
f(\mathcal{G},\mathbf{A})=\EL{\prod_{e=\{k,\ell\}\in\mathcal{E}}\scp{\u_k}{A_e\u_\ell}}
\eeq
where the reindexing of the matrices from $\mathbf{A}$ is straightforward. For each edge $e\in\mathcal{E}$, we denote $n(e)$ the number of times it occurs in the graphs,
\[
n(\{k,\ell\})=\left\vert
\{
e\in \mathcal{E},e=\{k,\ell\}
\}
\right\vert,
\]
and we denote by $e^{(1)},\dots,e^{(n(e))}$ the occurrences of the edges in the graph, note that it is important to differentiate them since while they connect the same eigenvector indices, they may be labeled by different matrices from $\mathbf{A}$. We define an equivalence relation between edges $e \sim e'$ if and only if $e=e'$ so that we can define the set $\tilde{\mathcal{E}} = \mathcal{E}/\sim$ where each element is a representative of the corresponding edge. The graph representation is illustrated in \Cref{fig:graph}

\begin{figure}[!ht]
	\centering
	\begin{tikzpicture}[scale = 1.2]
		\draw[-, black] (-1,0) -- (4,0);
		\node[draw,black,circle,fill=white] at (0,0) {};  
		\node[draw,black,circle,fill=black,label={below:}] (1) at (.5,0) {};
		\node[draw,black,circle,fill=black] (2) at (.5,0.5) {};
		\node[draw,black,circle,fill=black] (3) at (.5,1) {}; 
		\node[draw,black,circle,fill=black] (4) at (.5,1.5) {};
		\node[draw,black,circle,fill=white] at (1,0) {};
		\node[draw,black,circle,fill=black,label=below left:] (5) at (1.5,0) {};
		\node[draw,black,circle,fill=black] (6) at (1.5,0.5) {};
		\node[draw,black,circle,fill=white] at (2,0) {};
		\node[draw,black,circle,fill=black] (8) at (2.5,.5) {};
		\node[draw,black,circle,fill=black] (7) at (2.5,0) {};
		\node[draw,black,circle,fill=black] (9) at (2.5,1) {};
		\node[draw,black,circle,fill=black] (10) at (1.5,1.5) {};
		\node[draw,black,circle,fill=black] (11) at (1.5,1) {};
		\node[draw,black,circle,fill=black] (12) at (2.5,1.5) {};
		\node[draw,black,circle,fill=white] at (3,0) {};
		
		\draw[-, line width=.2em, bend left=60,black] (1) edge node[draw=none, midway, label={[label distance=-.2cm]left:$A_1$}] {} (3);
		\draw[-, line width=.2em, bend left=60,BrickRed] (4) edge node[draw=none, midway, label={[label distance=-.2cm]right:$A_2$}] {} (2);  
		\draw[-, line width=.2em,ForestGreen] (5) edge[bend right=40] node[draw=none, midway, label={[label distance =-.3cm]below:$A_3$}] {} (7); 
		\draw[-, line width=.2em,RoyalBlue] (6) edge node[draw=none, midway, label={[label distance =-.25cm]below:$A_4$}] {} (8); 
		\draw[-, line width=.2em,Violet] (11) edge node[draw=none, midway, label={[label distance =-.25cm]above:$A_5$}] {} (9); 		
		\draw[-, line width=.2em,BurntOrange] (12) edge[bend right=40] node[draw=none, midway, label={[label distance =-.3cm]above:$A_6$}] {} (10); 		
	\end{tikzpicture}
	\caption{An example of the graph representation of an ordered configuration. We have $n(\{j,j\})=2$ and $n(\{k,\ell\})=4$ and each edge is labeled (and thus colored) by a matrix from $\mathbf{A}.$ For this graph, the ansatz observable is given by $F(\x,\mathbf{A})=\frac{2}{27N^3}\langle A_1A_2\rangle\left(\langle A_3 A_4\rangle\langle A_5 A_6\rangle+\langle A_3 A_5\rangle\langle A_4 A_6\rangle+\langle A_3 A_6\rangle\langle A_4 A_5\rangle\right)$}
	\label{fig:graph}
\end{figure}
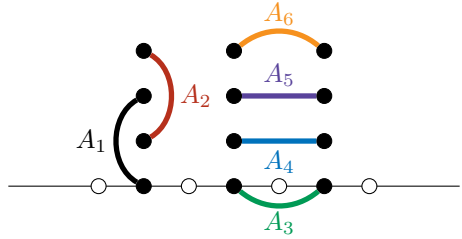

\bed
We define the \emph{ansatz observable} to be,
\[
F(\x,\mathbf{A})=F(\mathcal{G},\mathbf{A}) = \frac{1}{N^{n/2}\prod_{i=1}^N(n_i(\mathcal{G})-1)!!}\prod_{e=\{k,\ell\}\in\tilde{\mathcal{E}}}2^{\frac{n(e)}{2}\mathds{1}_{k=\ell}}\sum_{\Pi\in\mathrm{PM}[n(e)]}\prod_{\{i,j\}\in\Pi}\langle A_{e^{(i)}}A_{e^{(\Pi(i))}}\rangle
\]
where $n_i(\mathcal{G})=n_i(\mathcal{V})$ is the number of particles at site $i$, $\mathrm{PM}[i]$ is the set of perfect matchings on $\{1,\dots i\}$ with the convention that the empty sum is zero.
\eed

Before starting the analysis of the eigenvector observable, it is important to check that our ansatz observable $F$ is an actual one, in other words that it is in the kernel of $\mathscr{L}$. We first recall a corollary from \cite{marcinek2020high} which describes the kernel of $\mathscr{L}$.
\bel[\cite{marcinek2020high}*{Corollary 6.28}]
Consider a configuration $\x$ over $2n$ points, for a given perfect matching $\Pi$ over $2n$ points, we say that $\Pi \in \mathrm{Stab}(\x)$ if
\[
\Pi \bm{\cdot} \x \coloneqq (x_{\Pi(1)},\dots x_{\Pi(2n)})=(x_1,\dots,x_{2n})=\x.
\]
We then have
\[
\bigcap_{1\leqslant i<j\leqslant N}\mathrm{Ker}\,\mathscr{L}_{ij}=\mathrm{Span}\{\chi_{\Pi}\vert \Pi\in\mathrm{PM}[2n]\},
\quad
\chi_\Pi=\frac{\mathds{1}_{\Pi\in\mathrm{Stab}(\x)}}{\sqrt{\pi(\x)}} 
\]
\eel

Using this lemma we prove that $F$ is in the kernel of the operator $\mathscr{L}$:

\bel\label{lem:ansatz} It holds that
\[F\in \bigcap_{1\leqslant i<j\leqslant N}\mathrm{Ker}\,\mathscr{L}_{ij}\]
\eel

\begin{proof}
	The proof is based on rewriting the ansatz observable $F$ as a linear combination of $\chi_{\mathrm{\Pi}}.$ For now, it is defined as a product of sums of perfect matchings on the edges of $\mathcal{G}$. We first need to rewrite these perfect matchings as acting on the vertices of the graph rather than the edges. To do this, we use the first convention of ordered particles.
	
	For each edge $e$ of the graph, we consider $B_e$ to be the subgraph containing all and only occurrences of the edge $e$ from the initial graph. In particular, we see that $\mathcal{G}=\cup_{e\in\mathcal{E}}B_e$ where the union is not disjoint (for two occurrences of the same edge $e_1$ and $e_2$ we have $B_{e_1}=B_{e_2}$). The blocks of a configuration are illustrated in \Cref{fig:block}. 
	
	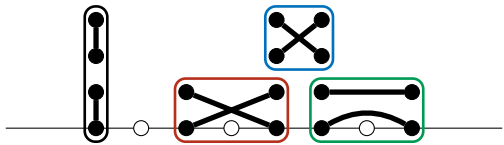
\begin{figure}[!ht]
		\centering
		\begin{tikzpicture}[scale = 1.2]
			\draw[-, black] (-1,0) -- (4.5,0);
			
			\node[draw,black,circle,fill=black] (1) at (0,0) {};
			\node[draw,black,circle,fill=black] (2) at (0,.4) {};
			\node[draw,black,circle,fill=black] (3) at (0,.8) {};
			\node[draw,black,circle,fill=black] (4) at (0,1.2) {};
			
			\node[draw,black,circle,fill=white] at (0.5,0) {};  
			
			\node[draw,black,circle,fill=black] (5) at (1,0) {};			
			\node[draw,black,circle,fill=black] (6) at (1,.4) {};	
			
			\node[draw,black,circle,fill=white] at (1.5,0) {};
			
			\node[draw,black,circle,fill=black] (7) at (2,0) {};			
			\node[draw,black,circle,fill=black] (8) at (2,.4) {};  	
			\node[draw,black,circle,fill=black] (9) at (2,.8) {};			
			\node[draw,black,circle,fill=black] (10) at (2,1.2) {};
			
			\node[draw,black,circle,fill=black] (11) at (2.5,0) {};			
			\node[draw,black,circle,fill=black] (12) at (2.5,.4) {};  	
			\node[draw,black,circle,fill=black] (13) at (2.5,.8) {};			
			\node[draw,black,circle,fill=black] (14) at (2.5,1.2) {};
			
			\node[draw,black,circle,fill=white] at (3,0) {};		
			
			\node[draw,black,circle,fill=black] (15) at (3.5,0) {};			
			\node[draw,black,circle,fill=black] (16) at (3.5,.4) {};	
			
			\draw[-, line width=.2em, black] (1) edge (2);
			\draw[-, line width=.2em, black] (3) edge (4);
			
			\draw[-, line width=.2em, black] (5) edge (8);
			\draw[-, line width=.2em, black] (6) edge (7);	
			
			\draw[-, line width=.2em, black] (9) edge (14);
			\draw[-, line width=.2em, black] (10) edge (13);
			
			\draw[-, line width=.2em, black] (11) edge[bend left] (15);
			\draw[-, line width=.2em, black] (12) edge (16);
			
			\draw[-, line width = .1em, black, rounded corners] (-.125,-.15) rectangle ++(.25,1.5);
			\draw[-, line width = .1em, BrickRed, rounded corners] (0.875,-.15) rectangle ++(1.25,.7);
			\draw[-, line width = .1em, RoyalBlue, rounded corners] (1.875,.65) rectangle ++(.75,.7);
			\draw[-, line width = .1em, ForestGreen, rounded corners] (2.375,-.15) rectangle ++(1.25,.7);	
		\end{tikzpicture}
		\caption{Illustration of blocks of a given configuration. All occurrences of a given edge are in the same block.}
		\label{fig:block}
	\end{figure}
	
	For a given edge $e=\{k,\ell\}$, there exist $j_1<j_2<\dots<j_{n(e)}$, using our first convention of ordered particles, such that 
	\[
	B_e = (x_{2j_1-1},x_{2j_1},x_{2j_2-1},x_{2j_2},\dots,x_{2j_{n(e)}-1},x_{2j_{n(e)}}).
	\]
	In particular, half of the $x$'s are $k$ and other half are $\ell$. From this notation, we define
	\[
	\mathcal{A}(B_e)=\mathds{1}_{n(e)\text{ is even}}\prod_{p=1}^{\frac{n(e)}{2}}\langle A_{j_{2p-1}}A_{j_{2p}}\rangle.
	\]
	We now define the set of perfect matchings which stabilize the block $B_e$ but also keep the edge structure, in other words $\mathcal{E}\text{-}\mathrm{Stab}(B_e)$ is the set of perfect matchings $\Pi$ over the set $\{2j_1-1,2j_1,\dots,2j_{n(e)}-1,2j_{n(e)}\},$
	such that $\Pi\in\mathrm{Stab}(B_e)$ and such that for all
	$p\in\{1,\dots,n(e)\}$ there exists
	$ q\in\{1,\dots,n(e)\}$ with $q\neq p$ such that
	$\Pi(\{{2j_p-1},{2j_p}\})
	=
	\{{2j_q-1},{2j_q}\}.$
	
	Let $r$ be the number of distinct blocks of the configuration $\x$ and denote them by $B_1,\dots B_r$. We define the set $\mathcal{G}\text{-}\mathrm{Stab}(\x)$ to be the set of perfect matchings over the whole configuration of $2n$ particles such that the restriction of $\Pi$ to each block $B_i$ is in $\mathcal{E}\text{-}\mathrm{Stab}(B_i).$
	
	The first observation is that for a given edge $e=\{k,\ell\}$ we have
	\[
	2^{\frac{n(e)}{2}\mathds{1}_{k=\ell}}\sum_{\Pi\in\mathrm{PM}[n(e)]}\prod_{i=1}^{n(e)}\langle A_{e^{(i)}}A_{e^{(\Pi(i))}}\rangle
	=
	\sum_{\Pi \in \mathcal{E}\text{-}\mathrm{Stab}(B_e)}\mathcal{A}\left(\Pi \bm{\cdot}B_e\right).
	\]
	Note that the $2^{\frac{n(e)}{2}}$ is necessary in the case when $e=\{k,k\}$ for some $k$. Since the stabilization property is immediate, there are now two choices for each matching between edges. Thus we can write,
	\begin{align*}
		F(\x,\mathbf{A}) = \frac{1}{N^{n/2}\sqrt{\pi(\x)}}\prod_{i=1}^r\sum_{\Pi\in\mathcal{E}\text{-}\mathrm{Stab}(B_i)}\mathcal{A}(\Pi\bm{\cdot}B_i)
		&=\frac{1}{N^{n/2}\sqrt{\pi(\x)}}\sum_{\substack{\Pi_1\in\mathcal{E}\text{-}\mathrm{Stab}(B_1)\\ \dots\\\Pi_r\in\mathcal{E}\text{-}\mathrm{Stab}(B_r)}}
		\prod_{i=1}^r \mathcal{A}(\Pi_i\bm{\cdot}B_i)\\
		&=
		\frac{1}{N^{n/2}}\sum_{\Pi\in\mathrm{PM}[2n]}\left[\prod_{i=1}^r \mathcal{A}(\Pi_{\vert B_i} \bm{\cdot}B_i)\right]
		\frac{\mathds{1}_{\Pi\in\mathcal{G}\text{-}\mathrm{Stab}(\x)}}{\sqrt{\pi(\x)}}
	\end{align*} 
where $\Pi_{\vert B_i}$ denote the restriction of $\Pi$ to $B_i$. Since $\mathcal{G}\text{-}\mathrm{Stab}(\x)\subset\mathrm{Stab}(\x)$, we see that $F$ can be written as a linear combination of $\chi_{\Pi}$ and thus $F$ belongs to the kernel of $\mathscr{L}$.
\end{proof}
\subsection{Dyson vector flow analysis}
 The scheme of the analysis performed in this section is similar to \cites{cipolloni2022rank, cipolloni2022normal}. Throughout this section we analyze the flow \eqref{eq:maineqob} on the event $\Omega_1\cap\Omega_2$, with (cf. \cite{cipolloni2022rank}*{Eqs. (4.8), (4.29)}) 
 \begin{equation}
 \label{eq:defom1}
 \Omega_1=\Omega_{1,\xi}:=\Big\{ \sup_{0\le t \le T} \max_{i\in[N]} N^{2/3}\widehat{i}^{1/3} | \lambda_i(t)-\gamma_i(t)| \le N^\xi\Big\}
 \end{equation}
 where $\hat{i}=\min(i, N-i+1),$ and

\begin{equation}
	\label{eq:defom2}
	\begin{split}
		&\Omega_2=\Omega_{2,\xi,\omega}:=\bigcap_{\substack{\Re z_i\in [-3,3], \\ |\Im z_i|\ge N^{-1+\omega}}}\Bigg[ \left\{\sup_{0\le t \le T}\rho_{i,t}^{-1/2}\big|\langle G_t(z_1)\mathring{A}_1\rangle\big|\le \frac{N^\xi \rho_1}{N\sqrt{|\Im z_1|}}\langle \mathring{A}^2\rangle^{1/2}\right\}
		\\
		&\qquad\bigcap_{k=2}^n \left\{\sup_{0\le t \le T}(\rho_t^*)^{-1/2}\left|\langle G_t(z_1)\mathring{A}_1\dots G_t(z_k)\mathring{A}_k\rangle-\mathds{1}_{k=2}m_{t}(z_1)m_{t}(z_2)\langle \mathring{A}_1\mathring{A}_2\rangle\right|\le \frac{N^{\xi+k/2-1}}{\sqrt{N\eta_*}}\prod_{i=1}^k \langle \mathring{A}_i^2\rangle^{1/2}\right\} \Bigg],
	\end{split}
\end{equation}
with $G_t(z):=(W_t-z)^{-1}$, $m_t(z)$ denoting the Stieltjes transform of $\rho_t(x)=\sqrt{4(1+t^2)-x^2}/[2(1+t)\pi]$, $\eta_*:=\min_i |\Im z_i|$, $\rho_{i,t}:=|\Im m_t(z_i)|$, and $\rho_t^*:=\max_i \rho_{i,t}$. Here $\gamma_i(t)$ denote the \emph{quantiles} of $\rho_t(x)$, which are implicitly defined by
 \[
 \int_{-\infty}^{\gamma_i(t)}\rho_t(x)\, \mathrm{d}x=\frac{i}{N}.
 \]
The fact that $\Omega_1$ holds with very high probability follows e.g. by \cite{scgeneral}*{Theorem 7.6}, while from \cite{cipolloni2022rank}*{Theorem 2.2} it follows that $\Omega_2$ holds with very high probability. Note that on $\Omega_1\cap\Omega_2$ for $\langle A\rangle=0$ it holds (see \cite{cipolloni2022rank}*{Eqs. (4.30)--(4.31)})
\begin{equation}
\label{eq:necessb}
\big|\langle \u_i, A\u_j\rangle\big|^2\le \frac{N^{2\omega}\langle A^2\rangle}{N[\rho_t(\gamma_i(t)+\mathrm{i}N^{-2/3})\wedge \rho_t(\gamma_i(t)+\mathrm{i}N^{-2/3})]}.
\end{equation}

 We start by introducing an observable which is localized and is directed by a short-range approximation of the actual dynamics as introduced in \cites{bourgade2013eigenvector, marcinek2020high}.

\bed Let $\ell,\, K>$ be two $N$-dependent parameters, $\alpha\in(0,1)$ a given constant, and $\y$ a given configuration of $n$ particles such that $y_i\in\unn{\alpha N}{(1-\alpha)N}\eqqcolon \mathcal{I}_\alpha,$ we define the short-range observable by 
\begin{equation}\label{eq:shortrange}
		\left\{
\begin{array}{ll}
	\partial_t g_t(\x;\ell, K, \y) &= \mathscr{S}g_t(\x;\ell,K,\y)\\[1ex]
	g_0(\x;\ell,K,\y) &= \mathrm{Av}(\x;K,\y)\left(f_0(\x)-F(\x)\right)
\end{array}\right.
\end{equation}
where $\mathrm{Av}(\x;K,\y)f(\x)\coloneqq \frac{1}{K}\sum_{j=K}^{2K-1}f(\x)\mathds{1}_{\Vert \x-\y\Vert_1 \leqslant j}$ with $\Vert \x-\y\Vert_1=\sum_{i=1}^{2n}\vert x_i-y_i\vert$ and
\[
\mathscr{S}=\sum_{\substack{\vert i-j\vert \leqslant \ell\\ i\neq j \in \mathcal{I}_\alpha}}c_{ij}\mathscr{L}_{ij}.
\]
In the rest of the paper we may omit the dependence of $g_t$ on the parameters $\ell,\,K,$ and $\y$. 

\eed
We also define the following term
\beq\label{eq:renorm}
\Phi_{\mathbf{A}}=\prod_{i=1}^n\sqrt{\frac{\langle A_i^2\rangle}{N}}\quad\text{which gives}\quad \vert g_t(\x)\vert\leqslant N^{n\xi}\Phi_{\bf A} 
\eeq
with very high probability since $\mathscr{S}$ is a contraction in $L^\infty$ and $\vert g_0(\x)\vert\leqslant N^{n\xi}\Phi_{\bf A}$ on $\Omega_1\cap\Omega_2$ from \eqref{eq:necessb}.
The main goal of this section is to prove the following proposition.
\begin{prop}\label{prop:main}
	Let $\eta>0$, suppose that parameters are chosen so that $N^{-1}\ll\eta\ll T\ll \ell/N\ll K/N$. On the event $\Omega_1\cap\Omega_2$, it holds that 
\begin{equation}
\sum_{{\bf x}\in \Lambda^n}\pi({\bf x}) |g_T({\bf x})|^2\le K^n\mathcal{E}_n,
\quad\text{with}\quad \mathcal{E}_n=N^{n\xi}\Phi_{\bf A}
\left(
	\frac{N^\varepsilon \ell}{K} + \frac{NT}{\ell} + \frac{1}{\sqrt{K}}+ \frac{1}{\sqrt{N\eta}}
\right)
\end{equation}
with $\pi({\bf x})$ is the reversible measure for $\mathscr{L}$ and the bound holds with very high probability. 
\end{prop}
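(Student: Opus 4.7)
The plan is to follow the energy scheme developed for the colored eigenvector moment flow in \cite{marcinek2020high} and refined for general observables in \cite{cipolloni2022normal}, exploiting the stationarity of the new $\x$-dependent ansatz $F$ proved in Lemma~\ref{lem:ansatz}. Since $F\in\bigcap_{ij}\ker\mathscr{L}_{ij}$, the unlocalized difference $\tilde g_t:=f_t-F$ solves the full DBM equation $\partial_t\tilde g_t=\sum_{i<j}c_{ij}(t)\mathscr{L}_{ij}\tilde g_t$, while $g_t$ is its short-range and initially averaged counterpart. First I would compare $g_t$ with $\tilde g_t$ cut off near $\y$, separating the error into (i) the gap between the short-range generator $\mathscr{S}$ and the full generator $\sum_{i<j}c_{ij}\mathscr{L}_{ij}$, and (ii) the effect of the initial $K$-averaging. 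Standard finite-speed-of-propagation estimates (cf.\ \cite{bourgade2013eigenvector}) using $c_{ij}(t)\lesssim 1/(N(\lambda_i-\lambda_j)^2)$ on $\Omega_1$ yield the leakage term $NT/\ell$, while smoothing over $K$ shifts contributes the $N^\varepsilon\ell/K$ term.

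Next I would run the $L^2(\pi)$ energy estimate for $g_t$ itself. Writing $\mathscr{L}_{ij}=\mathscr{M}_{ij}-\mathscr{E}_{ij}$ and using that both pieces are self-adjoint on $L^2(\pi)$ (each $\mathscr{M}_{ij}$ is a negative generator and each $\mathscr{E}_{ij}$, built from the involutive swaps $s_{ab}^{ij}$, is also negative semi-definite), one obtains
\[
\partial_t\!\!\sum_{\x\in\Lambda^n}\!\!\pi(\x)|g_t(\x)|^2=-2\!\!\!\!\!\sum_{\substack{|i-j|\le\ell\\ i\ne j\in\mathcal{I}_\alpha}}\!\!\!\!\!c_{ij}(t)\bigl[D_{\mathscr{M},ij}(g_t)-D_{\mathscr{E},ij}(g_t)\bigr],
\]
with $D_{\mathscr{M},ij},D_{\mathscr{E},ij}\ge 0$. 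The indefinite sign of $\mathscr{L}_{ij}$ is tamed by the discrete Gagliardo--Nirenberg-type bound of \cite{marcinek2020high}*{Sec.~7}, which controls $D_{\mathscr{E},ij}$ by $D_{\mathscr{M},ij}$ plus a remainder. After integration over $t\in[0,T]$ this remainder, combined with the a priori $L^\infty$ bound \eqref{eq:renorm} coming from the ETH \eqref{eq:eth}, produces the $1/\sqrt{K}$ term. Finally, inserting the multi-resolvent local law defining $\Omega_2$ through the identity $\mathrm{Av}_{ij}N|\langle\u_i,\mathring A\u_j\rangle|^2\sim N^{-1}\Tr[\Im G\mathring A\Im G'\mathring A]$ into the boundary terms contributes the $1/\sqrt{N\eta}$ piece. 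Summing the four error sources and using the volume factor $K^n$ of configurations in the support of $g_T$ yields $K^n\mathcal{E}_n$.

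The main obstacle I expect is precisely the loss of parabolicity caused by the exchange operator: unlike the uncolored eigenvector moment flow of \cite{bourgade2013eigenvector}, $\mathscr{L}$ is not a sum of dissipative operators, so without the correct ansatz the $L^2(\pi)$-energy could grow along the dynamics. The point is to show that the $\x$-dependent $F$ of Subsection~\ref{sub:ansatz} is chosen so that after subtracting it from $f_t$ the leading indefinite contribution of $D_{\mathscr{E},ij}$ cancels, reducing the problem to a controlled perturbation of the parabolic case. This step relies crucially on the decomposition of $F$ into the basis $\{\chi_\Pi\}$ carried out in the proof of Lemma~\ref{lem:ansatz} and on the compatibility of the block structure of configurations with the swap $s_{ab}^{ij}$; once it is established, the rest of the argument proceeds along the template of \cites{cipolloni2022rank, cipolloni2022normal}.
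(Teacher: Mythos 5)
Your outer scaffolding (short-range cutoff giving $NT/\ell$, initial $K$-averaging giving $N^\varepsilon \ell/K$, Gr\"onwall in $L^2(\pi)$) matches the paper, but the heart of the proof is missing. After the energy inequality, the dominant term is not a ``boundary term'' controlled by the averaged identity $\mathrm{Av}_{ij}\, N|\langle \u_i,\mathring{A}\u_j\rangle|^2\sim N^{-1}\Tr[\Im G\mathring{A}\Im G'\mathring{A}]$; it is the long-range part of the (averaged) move operator, in which particles jump to fresh sites and one must evaluate $\sum_{\bf j}^*\big(\prod_r a_{i_r,j_r}\big)\big(f_t({\bf x}_{{\bf a},{\bf b}}^{{\bf i}{\bf j}})-F({\bf x}_{{\bf a},{\bf b}}^{{\bf i}{\bf j}})\big)$. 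The paper's key new lemma performs a case-by-case graph analysis of these jumped configurations: jumps that isolate a single occurrence of an edge or create a cycle of length at least $3$ make $F$ vanish while the corresponding $f_t$-term becomes a normalized trace of products of $\Im G$'s with traceless $A$'s, hence $\O{N^{\xi}\Phi_{\bf A}/\sqrt{N\eta}}$ on $\Omega_2$; paired jumps produce $(\Im m_t(z_i))^2\langle A_kA_\ell\rangle$ both from $f_t$ (via the local law) and from $F$ (via its Wick structure with the $2^{n(e)/2}$ and $(n_i-1)!!$ factors), so the two contributions cancel to the same accuracy. This matching is exactly where the specific form of the ansatz, and not merely the kernel property of Lemma~\ref{lem:ansatz}, enters, and it is the sole source of the $1/\sqrt{N\eta}$ error. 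Without this step your scheme shows at best that $g_t$ stays bounded, not that it decays to the stated size.

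Two further points are off. The exchange operator is not handled by a Gagliardo--Nirenberg comparison of Dirichlet forms, nor does it generate the $1/\sqrt{K}$ term: with the regularized kernel $\eta/[(\lambda_i-\lambda_j)^2+\eta^2]$ each configuration receives only finitely many exchange contributions, so that whole term is bounded crudely by $(N\eta^2)^{-1}\lVert g_t\rVert_2^2$ and absorbed by the dissipation, while the $1/\sqrt{K}$ piece arises from the diagonal $K^{n-1}$ bookkeeping term of the averaged move operator. Relatedly, your concern that the $L^2(\pi)$ energy could grow without the correct ansatz conflicts with the input the paper actually uses: by \cite{marcinek2020high}*{Lemma 4.16} the colored generator is negative in $L^2(\pi)$ (what the exchange term destroys is the maximum principle and positivity preservation, not $L^2$ dissipativity), and this negativity is what licenses replacing $c_{ij}(t)$ by its $\eta$-regularized version at the very first step of the energy estimate. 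The role of the correct ansatz is therefore not to cancel an indefinite exchange Dirichlet form, but to reproduce the resolvent approximation of $f_t$ at configurations reached by long jumps.
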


The start of the proof is similar to that of \cite{cipolloni2022normal}*{Proposition 4.5}. To see the main difference between the two proofs, we split the proof of Proposition \ref{prop:main} into the following lemma and proposition.
\bel\label{lemma:previouspaper}  On the event $\Omega_1\cap\Omega_2$, it holds that 
\begin{multline}
	\partial_t \Vert g_t\Vert_2^2
	\leqslant 
	-\frac{1}{\eta}\left(C_1(n)+\frac{1}{N\eta}\right)\Vert g_t\Vert_2^2+C(n)\frac{N^\xi K^{n-1}}{\eta}\Phi_{\bf A}
	+\frac{1}{\eta}\sum_{{\bf x}\in\Lambda^n}\sum_{{\bf i}}^*\sum_{{\bf a},{\bf b}}^*\mathrm{Av}(\x)\vert g_t(\x)\vert \Psi^{\bf i}_{{\bf a},{\bf b}}(\x)\times \\ \times
	\left(
		\sum_{{\bf j}}^*\left(\prod_{r=1}^{2n}a_{i_r,j_r}\right)\left(f_t({\bf x}_{{\bf a},{\bf b}}^{{\bf i}{\bf j}})-F({\bf x}_{{\bf a},{\bf b}}^{{\bf i}{\bf j}})\right)
		+\Phi_{\bf A}\O{\frac{N^{\varepsilon + n\xi}\ell}{K}+\frac{N^{1+n\xi}T}{\ell}+\frac{N^{1+n\xi}\eta}{\ell}}
	\right)
\end{multline}
\eel
\begin{proof}
By \cite{marcinek2020high}*{Lemma 4.16}, the operator $\mathscr{L}$ is negative in $L^2$, hence we get
\begin{equation}
\label{eq:l2bound}
\partial_t \lVert g_t\rVert_2^2=\sum_{|i-j|\le \ell} c_{ij}(t) \langle g_t, \mathscr{L}_{ij}(t) g_t\rangle\le \frac{1}{N\eta}\sum_{|j-i|\le \ell}\sum_{{\bf x}\in\Lambda^n}\frac{\eta}{(\lambda_i-\lambda_j)^2+\eta^2} g_t({\bf x})(\mathscr{M}_{ij}-\mathcal{E}_{ij})g_t({\bf x}).
\end{equation}
We first control the exchange operator $-\mathcal{E}_{ij},$
\begin{equation}
	\label{eq:errorE}
 \frac{1}{N\eta}\sum_{|j-i|\le \ell}\sum_{{\bf x}\in\Lambda^n}\frac{\eta}{(\lambda_i-\lambda_j)^2+\eta^2} g_t({\bf x})(-\mathcal{E}_{ij})g_t({\bf x})\le \frac{1}{N\eta^2}\lVert g_t\rVert_2^2,
\end{equation}
where we used that
\[
g_t({\bf x})\mathcal{E}_{ij}g_t({\bf x})\sim g_t({\bf x})[g(s_{ab}^{ij}{\bf x})-g({\bf x})]\le |g(s_{ab}^{ij}{\bf x})|^2+|g_t({\bf x})|^2,
\]
and that only finitely many (dependent on $n$) $i,j,a,b$ contributes to the summation for each ${\bf x}$.

As in \cite{cipolloni2022normal}, we replace the move operator $\mathcal{M}$ by
\[
\mathcal{A}:=\frac{1}{\eta}\sum_{{\bf i},{\bf j}\in [N]^{2n}}^*\left(\prod_{r=1}^{2n}a_{i_r,j_r}^\mathscr{S}\right)\sum_{{\bf a},{\bf b}}^*\big(g_t({\bf x}_{{\bf a},{\bf b}}^{{\bf i}{\bf j}})-g_t({\bf x})\big)
\quad\text{where}\quad
a_{i_r,j_r}:=\frac{\eta}{N[(\lambda_{i_r}-\lambda_{j_r})^2+\eta^2]},
\]
$a_{i_r,j_r}^\mathscr{S}=a_{i_r,j_r}$ for $|i_r-j_r|\le \ell$ and equal to zero otherwise, and
\[
{\bf x}_{{\bf a},{\bf b}}^{{\bf i}{\bf j}}={\bf x}+\left(\prod_{r=1}^{2n}\delta_{x_{a_r}i_r}\delta_{x_{b_r}i_r}\right)\sum_{r=1}^{2n}(j_r-i_r)({\bf e}_{a_r}+{\bf e}_{b_r}).
\]
Here $\sum^*$ denotes that the sum is over all distinct indices. By \cite{cipolloni2022normal}*{Lemma 4.6}, we have
\[
\frac{1}{N\eta}\sum_{|j-i|\le \ell}\sum_{{\bf x}\in\Lambda^n}\frac{\eta}{(\lambda_i-\lambda_j)^2+\eta^2} g_t({\bf x})\mathcal{M}_{ij}g_t({\bf x})
\le  C(n)\langle g_t, \mathcal{A} g_t\rangle\le 0,
\]
 which together with \eqref{eq:l2bound} and \eqref{eq:errorE}, implies
\begin{equation}
\begin{split}
\partial_t \lVert g_t\rVert_2^2&\le \frac{C(n)}{N\eta}\sum_{|j-i|\le \ell}\sum_{{\bf x}\in\Lambda^n}\frac{\eta}{(\lambda_i-\lambda_j)^2+\eta^2} g_t({\bf x})\mathcal{A}_{ij}g_t({\bf x})+\frac{1}{N\eta^2}\Vert g_t\Vert^2_2 \\
&=\frac{1}{\eta}\sum_{{\bf x}\in\Lambda^n}\sum_{{\bf i},{\bf j}\in [N]^{2n}}^*\left(\prod_{r=1}^{2n}a_{i_r,j_r}^\mathscr{S}\right)\sum_{{\bf a},{\bf b}}^*{g_t({\bf x})}\big(g_t({\bf x}_{{\bf a},{\bf b}}^{{\bf i}{\bf j}})-g_t({\bf x})\big)\Psi({\bf x})+\frac{1}{N\eta^2}\Vert g_t\Vert^2_2,
\end{split}
\end{equation}
where
\[
\Psi({\bf x}):=\prod_{r=1}^{2n}\delta_{x_{a_r}i_r}\delta_{x_{b_r}i_r}.
\]
For the term with $\vert g_t(\x)\vert^2 $ we can use the same bound as in \cite{cipolloni2022normal}*{(4.34)} and write
\begin{multline*}
-\sum_{\x \in \Lambda^n}
\vert g_t(\x)\vert^2
\sum_{{\bf a}, {\bf b}}
\sum_{{\bf i}, {\bf j}\in[N]^{2n}}^*
\Psi^{\bf i}_{{\bf a},{\bf b}}(\x)
\prod_{r=1}^{2n}a_{i_r,j_r}^\mathscr{S}\\
\leqslant 
-C(n)\sum_{\x\in\Lambda^n}
\vert g_t(\x)\vert^2
\sum_{{\bf a},{\bf b}}\sum_{\bf i}\Psi^{\bf i}_{{\bf a},{\bf b}}(\x)
\leqslant -C_1(n)\Vert g_t\Vert^2_2+C(n)N^{n\xi}\Phi_{\bf A}K^{n-1}
\end{multline*}
which gives that 
\begin{multline}
\partial_t \Vert g_t\Vert_2^2\leqslant -\frac{1}{\eta}\left(C_1(n)+\frac{1}{N\eta}\right)\Vert g_t\Vert_2^2+C(n)\frac{N^{n\xi} K^{n-1}}{\eta}\Phi_{\bf A}\\
+\frac{1}{\eta}\sum_{{\bf x}\in\Lambda^n}\sum_{{\bf i},{\bf j}\in [N]^{2n}}^*\left(\prod_{r=1}^{2n}a_{i_r,j_r}^\mathscr{S}\right)\sum_{{\bf a},{\bf b}}^*{g_t({\bf x})}g_t({\bf x}_{{\bf a},{\bf b}}^{{\bf i}{\bf j}})\Psi_{{\bf a},{\bf b}}^{\bf i}({\bf x})
\end{multline}

We can now use the fact that $F(\x)$ is in the kernel of $\mathscr{L}$ by Lemma \ref{lem:ansatz} and use similar bounds as \cite{marcinek2020high}*{(5.89-91), (5.95-97)} and \cite{cipolloni2022rank} to obtain
\begin{multline}
\sum_{j}^*
\left(\prod_{i=1}^{2n}
	a_{i_r,j_r}^{\mathscr{S}}
\right)
\sum_{{\bf a},{\bf b}}^*g_t(\x^{{\bf i}{\bf j}}_{{\bf a},{\bf b}})
=
\mathrm{Av}(\x)\sum_{j}^*
\left(\prod_{r=1}^{2n}
a_{i_r,j_r}
\right)
\sum_{{\bf a},{\bf b}}^*\left(
	f_t(\x^{{\bf i}{\bf j}}_{{\bf a},{\bf b}})-F({\bf x}_{{\bf a},{\bf b}}^{{\bf i}{\bf j}})
\right)\\
+\O{\frac{N^{\varepsilon + n\xi}\ell}{K}\Phi_{\bf A}+\frac{N^{1+n\xi}T}{\ell}\Phi_A+\frac{N^{1+n\xi}\eta}{\ell}\Phi_{\bf A}}
\end{multline}
which gives the results.
\end{proof}
We now need to analyze the term involving the configurations where jumps have occurred. The main difference with previous analysis from \cite{cipolloni2022normal}
 is that we do not have a sum over all perfect matchings of the configurations with the same matrix $A$ but we need to consider each colored perfect matching separately.
 \bel  On the event $\Omega_1\cap\Omega_2$, we have that
\[
\sum_{{\bf i}}^*\sum_{{\bf a},{\bf b}}\Psi^{\bf i}_{{\bf a},{\bf b}}(\x)\sum_{{\bf j}}^*\left(\prod_{r=1}^{2n}a_{i_r,j_r}\right)\left(f_t({\bf x}_{{\bf a},{\bf b}}^{{\bf i}{\bf j}})-F({\bf x}_{{\bf a},{\bf b}}^{{\bf i}{\bf j}})\right)
=
\O{\frac{N^{n\varepsilon}}{\sqrt{N\eta}}\Phi_{\bf A}}.
\]
\eel
\begin{proof}
We start by using the graph convention from \eqref{eq:graph}. Remember that we are going to make particles jump to sites where there are no particles, thus two vertices from each site are going to jump to a new empty site. 
There are several possible cases for our configuration after jumps ${\bf x}_{{\bf a},{\bf b}}^{{\bf i}{\bf j}},$ we now fix an edge $e\in\mathcal{E}$ and consider the different cases both for our observable $f_t$ and for the ansatz observable $F$. For simplicity, we introduce the following notation 
\[
F_e(\mathcal{G})=\sum_{\Pi\in \mathrm{PM}[n(e)]}\sum_{\{i,j\}\in \Pi}\langle A_{e^{(i)}}A_{e^{\Pi(i)}}\rangle
\]
Note that since we ${\bf x}_{{\bf a},{\bf b}}^{{\bf i}{\bf j}}$ has either 2 or 0 particles at each site, we see that $(n(i)-1)!!=1$ and we can write
\[
F({\bf x}_{{\bf a},{\bf b}}^{{\bf i}{\bf j}})=\frac{1}{N^{n/2}}\prod_{e=\{k,\ell\}\in\tilde{\mathcal{E}}}2^{n(e)\mathds{1}_{k=\ell}}F_e(\mathcal{G_{{\bf a},{\bf b}}^{{\bf i}{\bf j}}})
\]
\paragraph{Case 1: $e=\{i,i\}$.} For this case, we have three possible cases of jumps. We recall that at the site $i$, we have $n(e)$ occurrences of the edge $\{i,i\}$.
\begin{itemize}
	\item If for at least one occurrence of the edge, both vertices are selected to move to the same new site $j_r$, we obtain a subleading term. Indeed, the resulting graph has only one occurrence of the edge $e'=\{j_r,j_r\}$ (see Figure~\ref{fig:jumpsame}). Thus, since it is impossible to perform a perfect matching on one vertex, we have $F_{e'}(\mathcal{G}_{{\bf a},{\bf b}}^{{\bf i}{\bf j}})=0$ and thus $F(\x_{{\bf a},{\bf b}}^{{\bf i}{\bf j}})=0$. For the observable $f_t(\x_{{\bf a},{\bf b}}^{{\bf i}{\bf j}}),$ it means that we obtain a term of the form
	\begin{equation*}
	\sum_{j_r=1}^*a_{i,j_r}\scp{\u_{j_r}}{A\u_{j_r}} = \sum_{j_r=1}^* \frac{\eta \scp{\u_{j_r}}{A\u_{j_r}}}{N[(\lambda_{i_r}-\lambda_{j_r})^2+\eta^2]}
	=
	\Im \langle G_t(z_{i})A\rangle + \O{\frac{N^{\xi}\sqrt{\langle A^2\rangle}}{N^{3/2}\eta}}
	=
	\O{\frac{N^{\xi}\sqrt{\langle A^2\rangle}}{N\sqrt{\eta}}}
	\end{equation*}
where we added in the second equality the finitely many missing terms in the sum and we used the matrix-valued local law \eqref{eq:defom2} in the third one. So that we can bound,
 for this case of jumps,
\[
\sum_{\bf j}^*\left(\prod_{r=1}^{2n}a_{i_r,j_r}\right)
\left(
f_t(\x_{{\bf a},{\bf b}}^{{\bf i}{\bf j}})-F(\x_{{\bf a},{\bf b}}^{{\bf i}{\bf j}})
\right)
=
\O{\frac{N^{n\xi}}{N\sqrt{\eta}}\sqrt{\langle A_{i_r}^2\rangle}\prod_{i\neq i_r}\sqrt{\frac{\langle A_{i}\rangle }{N}}}=\O{\frac{N^{n\xi}}{\sqrt{N\eta}}\Phi_{\bf A}}.
\]
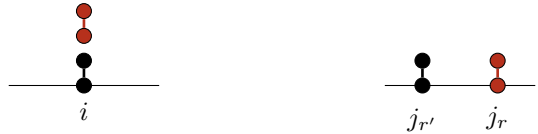
\begin{figure}[!ht]
	\centering 
	\begin{tikzpicture}
		\draw[black] (-1,0) -- (1,0);
		
		\node[draw,black,fill=black, label=below:$\displaystyle{i}$] (1) at (0,0) {};
		\node[draw,black,fill=black,] (2) at (0,.33) {};
		\node[draw,black, fill=BrickRed] (3) at (0,.66) {};
		\node[draw,black, fill=BrickRed] (4) at (0,.99) {};
		
		 \path[-, black,line width=.1em] (1) edge (2);
		 \path[-, BrickRed,line width=.1em] (3) edge (4);

		\begin{scope}[xshift = 5cm]
			\draw[black] (-1,0) -- (1,0);
			
			\node[draw,black,fill=black, label=below:$\displaystyle{j_{r'}}$] (11) at (-0.5,0) {};
			\node[draw,black,fill=black,] (12) at (-0.5,.33) {};
			\node[draw,black, fill=BrickRed, label=below:$\displaystyle{j_{r}}$] (13) at (0.5,.0) {};
			\node[draw,black, fill=BrickRed] (14) at (0.5,.33) {};
			
			\path[-, black,line width=.1em] (11) edge (12);
			\path[-, BrickRed,line width=.1em] (13) edge (14);
		\end{scope}
	\end{tikzpicture}
\caption{This jump structure makes each edge from the initial configuration jump together to an edge on a distinct sites.}
\label{fig:jumpsame}
\end{figure}
\item We can take vertices from a cycle of occurrences $e^{(m_1)}$,\dots, $e^{(m_p)}.$ In the sense that each pair of vertices which jumps to the same site $j_r$ is taken from two different occurrences. Suppose that $k\geqslant 3$, this creates $p$ single edges and thus for the same reason as above $F(\x_{\bf{a},\bf{b}}^{\bf{i}\bf{j}})=0$. For $f_t(\x_{{\bf a},{\bf b}}^{{\bf i}{\bf j}})$, we obtain a term of the form,
\begin{multline*}
\sum_{j_1,\dots j_p=1}^Na_{i,j_1}\dots a_{i,j_p}\scp{\u_{j_1}}{A'_1\u_{j_2}}\scp{\u_{j_2}}{A'_2\u_{j_3}}\dots\scp{\u_{j_p}}{A'_p\u_{j_1}}
=
N^{1-p}\langle \Im G_t(z_i)A'_1\dots \Im G_t(z_i)A'_p\rangle\\
=
\O{\frac{N^{\xi-p/2}}{\sqrt{N\eta}}\prod_{i=1}^p\sqrt{\langle (A'_i)^2\rangle}}.
\end{multline*}
where we reindexed the matrices as $A'_i=A_{e^{(m_i)}}.$
Thus, by trivially bounding the rest of the edges as for above, we obtain a term 
\[
\sum_{\bf j}^*\left(\prod_{r=1}^{2n}a_{i_r,j_r}\right)
\left(
f_t(\x_{{\bf a},{\bf b}}^{{\bf i}{\bf j}})-F(\x_{{\bf a},{\bf b}}^{{\bf i}{\bf j}})
\right)
=
\O{\frac{N^{(n-p+1)\xi}}{\sqrt{N\eta}}\Phi_{\bf A}}
\]
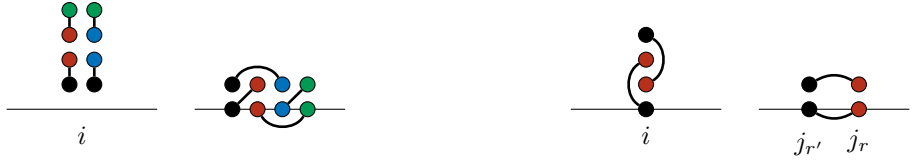
\begin{figure}[!ht]
	\centering
	\begin{tikzpicture}
		\draw[black] (-1,0) -- (1,0);
		\node[draw=none,black,label=below:\textcolor{black}{$i$}] at (0,0) {};
		
		\node[draw,black,fill=black] (1) at (-0.165,0.33) {};
		\node[draw,black,fill=BrickRed] (2) at (-0.165,0.66) {};
		\node[draw,black, fill=BrickRed] (3) at (-0.165,0.99) {};
		\node[draw,black, fill=ForestGreen] (4) at (-0.165,1.32) {};
		
		\node[draw,black,fill=black] (11) at (0.165,0.33) {};
		\node[draw,black,fill=RoyalBlue] (12) at (0.165,0.66) {};
		\node[draw,black, fill=RoyalBlue] (13) at (0.165,0.99) {};
		\node[draw,black, fill=ForestGreen] (14) at (0.165,1.32) {};
		
		\path[-, black,line width=.1em] (11) edge (12);
		\path[-, black,line width=.1em] (1) edge (2);
		\path[-, black,line width=.1em] (13) edge (14);
		\path[-, black,line width=.1em] (3) edge (4);

		\begin{scope}[xshift=2.5cm]
			\draw[black] (-1,0) -- (1,0);
			\node[draw,black,fill=black] (1) at (-0.5,0.33) {};
			\node[draw,black,fill=black] (2) at (-0.5,0) {};
			\node[draw,black, fill=BrickRed] (3) at (-0.165,0) {};
			\node[draw,black, fill=BrickRed] (4) at (-0.165,.33) {};
			
			\node[draw,black,fill=RoyalBlue] (11) at (0.165,0.33) {};
			\node[draw,black,fill=RoyalBlue] (12) at (0.165,0) {};
			\node[draw,black, fill=ForestGreen] (13) at (0.5,0) {};
			\node[draw,black, fill=ForestGreen] (14) at (0.5,.33) {};
			
			\path[-, black,line width=.1em] (2) edge (4);
			\path[-, black,line width=.1em] (3) edge[bend right=60] (13);
			\path[-, black,line width=.1em] (12) edge (14);
			\path[-, black,line width=.1em] (11) edge[bend right=60] (1);
		\end{scope}
		\begin{scope}[xshift=7.5cm]
			\draw[black] (-1,0) -- (1,0);
			
			\node[draw,black, fill=black, label=below:$\textcolor{black}{i}$] (1) at (0,0) {};
			\node[draw,black, fill=BrickRed] (2) at (0,.33) {};
			\node[draw,black, fill=BrickRed] (3) at (0,.66) {};
			\node[draw,black, fill=black] (4) at (0,.99) {};
			
			\path[-,black,line width=.1em] (1) edge[bend left=60] (3);
			\path[-,black,line width=.1em] (4) edge[bend left=60] (2);
			
			\begin{scope}[xshift = 2.5cm]
				\draw[black] (-1,0) -- (1,0);
				
				\node[draw,black,fill=black, label=below:$\textcolor{black}{j_{r'}}$] (11) at (-0.33,0) {};
				\node[draw,black,fill=black,] (12) at (-0.33,.33) {};
				\node[draw,black, fill=BrickRed, label=below:$\textcolor{black}{j_{r}}$] (13) at (0.33,.0) {};
				\node[draw,black, fill=BrickRed] (14) at (0.33,.33) {};
				
				\path[-, black,line width=.1em] (11) edge[bend right=30] (13);
				\path[-, black,line width=.1em] (12) edge[bend left=30] (14);
			\end{scope}
			
		\end{scope}
	\end{tikzpicture}
\caption{These jumps take two vertices from different edges and place them on the same site $j_r$. We see that a cycle has been created in the new configuration of particles. If $k\geqslant 3$, as in the left figure ($k=4$), the ansatz observable vanishes while if $k=2$, as in the right, the ansatz observable is nonzero.}
\label{fig:jumpdif}
\end{figure}
\item Now, if we consider two occurrences of the edge $e=\{x_{2k-1},x_{2k}\}$ and $e'=\{x_{2\ell-1},x_{2\ell}\}$ such that $x_{2k-1}=x_{2k}=x_{2\ell-1}=x_{2\ell}=i.$ If we choose the two particles $x_{2k-1}$ and $x_{2\ell}$ to jump to $j_r$ and $x_{2k}$ and $x_{2\ell-1}$ to jump to $j_{r'}$, the new graph has exactly two occurrences of the edge $e''=\{j_r,j_{r'}\}$ (see Figure~\ref{fig:jumpdif}). Thus, $F_{e''}(\mathcal{G}_{{\bf a},{\bf b}}^{{\bf i}{\bf j}})=\langle A_{k}A_{\ell}\rangle$ and 
\[
\sum_{j_r,j_{r'}=1}^Na_{i,j_r}a_{i,j_{r'}}F_{e''}(\mathcal{G}_{{\bf a},{\bf b}}^{{\bf i}{\bf j}})={(\Im m_t(z_i))^2}\langle A_{k}A_{\ell}\rangle. 
\] 
For the observable, the same reasoning holds and we obtain
\[
\sum_{j_r,j_{r'}=1}^Na_{i,j_r}a_{i,j_{r'}}\scp{\u_{j_r}}{A_k\u_{j_{r'}}}\scp{\u_{j_r}}{A_\ell\u_{j_{r'}}}
=
{(\Im m_t(z_i))^2}\langle A_{k}A_{\ell}\rangle
+
\O{\frac{N^\xi\sqrt{\langle A_k^2\rangle\langle A_\ell^2\rangle}}{\sqrt{N\eta}}}.
\]
\end{itemize}

\paragraph{Case 2: $e=\{i_1,i_2\}$ with $i_1\neq i_2$.} In this case we have two subcases when choosing the jumping particles.
\begin{itemize}
	\item Suppose that the particle at site $i_1$ and $i_2$ jump respectively to the sites $j_1$ and $j_2$. If at the site $i_1$, the second chosen particle is attached to a
	particle at a site $i_p$ with $i_p\neq i_2$. Then after the jumps, the edge $\{j_{1},j_2\}$ will be a single edge and thus $F(\x_{{\bf a}{\bf b}}^{{\bf i}{\bf j}})=0.$ For the observable
	$f_t$, this creates a cycle of a certain order as in the second point of the Case 1 above and we get that, for some $p>0$,
	\[
		\sum_{\bf j}^*\left(\prod_{r=1}^{2n}a_{i_r,j_r}\right)
		\left(
		f_t(\x_{{\bf a},{\bf b}}^{{\bf i}{\bf j}})-F(\x_{{\bf a},{\bf b}}^{{\bf i}{\bf j}})
		\right)
		=
		\O{\frac{N^{(n-p+1)\xi}}{\sqrt{N\eta}}\Phi_{\bf A}}.	
	\]
	\item Suppose again that the particle at site $i_1$ and $i_2$ jump respectively to the sites $j_1$ and $j_2.$ Suppose now that the other chosen particle at site $i_1$ belongs to an occurrence of the edge $\{i_1,i_2\}$ and that its attached vertex is also jumping to site $j_2,$ then we are exactly in the same case as the third item of Case 1.
	
\end{itemize}
Thus, we see that if we are in the first or second item of Case 1 or the first item of Case 2 for at least one edge, we obtain that 
\[
\sum_{\bf j}^*\left(\prod_{r=1}^{2n}a_{i_r,j_r}\right)
\left(
f_t(\x_{{\bf a},{\bf b}}^{{\bf i}{\bf j}})-F(\x_{{\bf a},{\bf b}}^{{\bf i}{\bf j}})
\right)
=
\O{\frac{N^{n\xi}}{\sqrt{N\eta}}\Phi_{\bf A}}.
\]

If we perform for all particles in the configuration, jumps as in the third item of Case 1 or the second of Case 2 (if possible), we obtain an error of 
\[
\sum_{\bf j}^*\left(\prod_{r=1}^{2n}a_{i_r,j_r}\right)
\left(
f_t(\x_{{\bf a},{\bf b}}^{{\bf i}{\bf j}})-F(\x_{{\bf a},{\bf b}}^{{\bf i}{\bf j}})
\right)
=
\O{\frac{N^{n\xi}}{\sqrt{N\eta}}\sqrt{\prod_{k=1}^n\frac{\langle A_k^2\rangle}{N}}}
=
\O{\frac{N^{n\xi}}{\sqrt{N\eta}}\Phi_{\bf A}}
\]
which concludes the proof.
\end{proof}
We can now proof Proposition \ref{prop:main}.
\begin{proof}[Proof of Proposition \ref{prop:main}] The proof is now the same as the $L^2$ bound proved in \cite{cipolloni2022normal}*{Equation (4.50) and beyond} where the error $\mathcal{E}$ is slightly changed to take in accound the different matrices from $\mathbf{A}$.
\end{proof}

The $L^2$ bound from Proposition \ref{prop:main} can now be converted into a $L^\infty$ bound using the same techniques as \cite{cipolloni2022normal}*{Section 4.4} based on the estimates from \cite{marcinek2020high}. The proof is the same since we need the analysis of the \emph{colored} eigenvector moment flow which has been done in the two papers above (even though \cite{cipolloni2022normal} did not needed it \emph{per se}). For this reason, we give the following corollary without proof and refer to \cite{cipolloni2022normal}*{Section 4.4} for more details.

\begin{cor}
\label{cor:fincor}
	Let $\delta>0$. For any $n\in\N$, there exists a constant $c(n)$ such that for any $\varepsilon>0$ and $T\geqslant N^{-1+\varepsilon}$, we have
	\[
	\frac{1}{\Phi_{\mathbf{A}}}\sup_{\mathcal{G}}\vert f_{\bm{\lambda},T}(\mathcal{G},\mathbf{A})-F(\mathcal{G},\mathbf{A})\vert \leqslant C(n,\varepsilon,\delta)N^{-c(n)}
	\]
	where the supremum holds for $\mathcal{G}=(\mathcal{V},\mathcal{E})$ with $\vert \mathcal{V}\vert =2n$ and $\mathcal{V}\subset \mathcal{I}_\alpha^{2n}$and the bounds hold with very high probability.
\end{cor}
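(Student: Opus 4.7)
The plan is to convert the $L^2$-bound of Proposition~\ref{prop:main} into a pointwise $L^\infty$-bound on $f_{\bm{\lambda},T}(\mathcal{G},\mathbf{A})-F(\mathcal{G},\mathbf{A})$ for every admissible configuration $\mathcal{G}$, by closely following the strategy of \cite{cipolloni2022normal}*{Section 4.4} (itself an adaptation of \cite{marcinek2020high}). The only genuinely new ingredient is bookkeeping for the colored structure of $\mathscr{L}$ and for the $\x$- and $\mathbf{A}$-dependence of the ansatz $F$; once this is in place, the analytic machinery is identical.

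The first step is to interpret $\mathscr{S}$ as the generator of a reversible Markov jump process on $\Lambda^n$ with invariant measure $\pi$, so that $g_t$ is the corresponding heat semigroup applied to $g_0(\x)=\mathrm{Av}(\x;K,\y)(f_0(\x)-F(\x))$. The Nash-type inequality for this walk, together with the ultracontractivity estimates proved in \cite{marcinek2020high} (and reused in \cite{cipolloni2022normal}), upgrades the bound $\sum_\x \pi(\x)|g_T(\x)|^2\le K^n\mathcal{E}_n$ of Proposition~\ref{prop:main} to a pointwise bound $|g_T(\y)|\le N^{O(\xi)}\mathcal{E}_n$, at the cost of only subpolynomial factors in $N$, provided $T$ is slightly larger than the mixing time $N^{-1+\epsilon}$.

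Second, I would compare $g_T(\y)$ with $f_T(\y)-F(\y)$. Two errors must be estimated. The first comes from replacing $\mathscr{L}$ by the short-range truncation $\mathscr{S}$; a finite speed of propagation argument exactly as in \cite{marcinek2020high} shows the two semigroups agree on observables localized near $\y$ up to an error $O(NT/\ell)\Phi_{\mathbf{A}}$. The second comes from the averaging $\mathrm{Av}(\cdot;K,\y)$, which is removed by exploiting slow variation of $f_T$ and $F$ in $\x$: here the fact that $F$ lies in $\bigcap_{ij}\mathrm{Ker}\,\mathscr{L}_{ij}$ (Lemma~\ref{lem:ansatz}) is crucial, because it guarantees that the relevant differences $F(\x)-F(\x')$ for nearby configurations are themselves controlled by the colored moment flow and hence by $\Phi_{\mathbf{A}}\,\mathcal{E}_n$. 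Choosing $\eta=N^{-1+\epsilon'}$ and $\eta\ll T\ll \ell/N\ll K/N\ll 1$ as small powers of $N$ yields $\mathcal{E}_n\le \Phi_{\mathbf{A}}N^{-c(n)}$ for some $c(n)>0$, which is the claimed bound.

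I expect the main obstacle to be precisely the bookkeeping of the colored structure in the averaging removal: one must re-examine every step where \cite{cipolloni2022normal} used the fact that the ansatz observable depended only on $\x$ (through sums over all perfect matchings against a single $A$), and replace it by the finer statement that $F(\x,\mathbf{A})$ tracks the particular pairing of edges with the matrices $A_1,\dots,A_n$. The identity proved in Lemma~\ref{lem:ansatz}, together with the case-by-case analysis already performed in the proof of Proposition~\ref{prop:main} (which isolated exactly the jump patterns whose leading contribution matches $F$), is what makes this adaptation go through, so no new ideas beyond the matching bookkeeping are required; this is why the author feels justified in quoting the result without reproducing the argument.
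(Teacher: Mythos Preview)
Your proposal is essentially correct and follows the same route the paper indicates: the paper itself gives no argument for Corollary~\ref{cor:fincor}, merely referring to \cite{cipolloni2022normal}*{Section 4.4} and \cite{marcinek2020high}, and what you describe is precisely the content of that reference---ultracontractivity/Nash estimates for the colored walk to pass from $L^2$ to $L^\infty$, finite speed of propagation to compare $\mathscr{S}$ with $\mathscr{L}$, and removal of the averaging, with the colored bookkeeping and the kernel property of $F$ from Lemma~\ref{lem:ansatz} being the only adaptations needed. One small slip: $\mathcal{E}_n$ already contains the factor $\Phi_{\mathbf{A}}$, so writing ``controlled by $\Phi_{\mathbf{A}}\,\mathcal{E}_n$'' in your second step double-counts it; the final inequality $\mathcal{E}_n\le \Phi_{\mathbf{A}}N^{-c(n)}$ is the correct form.
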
	

We conclude this section with the proof of Theorem~\ref{theo:main}.

\begin{proof}[Proof of Theorem~\ref{theo:main}]

Corollary~\ref{cor:fincor} proves Theorem~\ref{theo:main} for matrices $W$ with a small Gaussian component of size $T\sim N^{-1+\varepsilon}$. Then, this Gaussian component can be easily removed by a standard Green's Function Comparison (GFT) argument (see Appendix~\ref{sec:GFT} for a detailed argument).

\end{proof}

\appendix

\section{Green's Function Comparison (GFT)}
\label{sec:GFT}

Consider the Ornstein--Uhlenbeck flow
\begin{equation}
\label{eq:OU}
\d \hat{W}_t=-\frac{\hat{W}_t}{2}\d t+\frac{\d \hat{B}_t}{\sqrt{N}},\qquad \hat{W}_0=W,
\end{equation}
with $\hat{B}_t$ being a matrix valued real symmetric or complex Hermitian Brownian motion, and $W$ being a Wigner matrix satisfying Assumption~\ref{ass:momass}. Note that along the flow \eqref{eq:OU} the first two moments of $\hat{W}_t$ are preserved. Note also that the dynamics is slightly different from \eqref{eq:DBM} but it is chosen as an initial condition for \eqref{eq:DBM} such that at time $T$, their distributions are the same, see \cite{cipolloni2022rank}*{(B.1)--(B.3)} for more details. We denote the resolvent of $\hat{W}_t$ by $G_t(z):=(\hat{W}_t-z)^{-1}$, with $z\in \C\setminus\R$. 

In this section we show that for $t$ sufficiently small the distribution of the eigenvectors of $\hat{W}_t$ is very close to their distribution at time $t=0$. The next proposition shows that as long as $t\ll N^{1/2}$ the change in the distribution of eigenvectors is negligible. We remark that for the purpose of proving Theorem~\ref{theo:main} we need to choose $t=N^{-1+\epsilon}$, for some small fixed $\epsilon>0$, hence much smaller that the threshold $N^{-1/2}$. The fact that to prove Theorem~\ref{theo:main}  it is enough to prove Proposition~\ref{pro:GFT} for resolvents at a spectral parameter $\eta=N^{-1-\zeta}$ follows by level repulsion \cite{knowles2013eigenvector}.

\begin{prop}
\label{pro:GFT}
Fix any $p\in\N,\zeta, \delta>0$,and consider a smooth function $F:\R^p\to\R$. Then there exists a constant $C>0$ such that
\begin{equation}
\label{eq:borneed}
\left|\E F\left(N^{-(k-2)/2}C_k^{-1} \langle \Im G_t(z_1) A_1\dots \Im G_t(z_k)A_k\rangle   \right)_{k\in [p]}-\E F\left(\Im G_t \to \Im G_0\right)_{k\in [p]}\right|\lesssim t N^{1/2+C\zeta}.
\end{equation}
Here $z_i=E_i+\I\eta$, with $\eta=N^{-1-\zeta}$ and $E_i \in [-2+\delta,2-\delta]$, $A_1,A_2,\dots$ are Hermitian matrices with $\langle A_i\rangle=0$, and
\[
C_k=C_k(A_1,\dots,A_k):= \prod_{i\in [k]} \langle A_i^2\rangle^{1/2}.
\]
\end{prop}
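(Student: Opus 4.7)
The plan is a standard Green's function comparison (GFT) argument via Itô calculus along the Ornstein--Uhlenbeck flow \eqref{eq:OU}. Writing $X_t^{(k)} := N^{-(k-2)/2} C_k^{-1} \langle \Im G_t(z_1) A_1 \cdots \Im G_t(z_k) A_k\rangle$ for $k \in [p]$ and $X_t = (X_t^{(1)},\dots,X_t^{(p)})$, the goal reduces to bounding $|\E F(X_t) - \E F(X_0)|$ by $t N^{1/2+C\zeta}$. Applying Itô's formula to $F(X_t)$ along \eqref{eq:OU} produces two kinds of contributions: a drift term from $-\hat W_t/2$ and a quadratic variation term from the Brownian part $\d\hat B_t/\sqrt N$, schematically
\begin{equation*}
	\frac{\d}{\d t}\E F(X_t) = -\frac{1}{2}\sum_{a,b}\E\big[\hat W_{ab}\,\partial_{ab} F(X_t)\big] + \frac{1}{2N}\sum_{a,b}\E\big[\partial_{ab}^2 F(X_t)\big] + \text{symmetry terms},
\end{equation*}
where $\partial_{ab}$ denotes differentiation in $\hat W_{ab}$ respecting the (anti-)symmetry constraint.

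The next step is to apply the cumulant expansion to the drift term, available since Assumption~\ref{ass:momass} guarantees all moments of $\chi_\d,\chi_\mathrm{od}$:
\begin{equation*}
	\E\big[\hat W_{ab}\,\partial_{ab} F\big] = \frac{1}{N}\E\big[\partial_{ab}^2 F\big] + \sum_{k\geq 2}\frac{\kappa_{k+1}(\hat W_{ab})}{k!}\,\E\big[\partial_{ab}^{k+1} F\big] + (\text{negligible tail}).
\end{equation*}
The $k=1$ contribution is driven by the variance $\E|\hat W_{ab}|^2 = 1/N$ and cancels precisely against the quadratic-variation term from Itô; this is exactly the moment-matching property that makes the OU flow the correct reference dynamics. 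What remains is
\begin{equation*}
	\frac{\d}{\d t}\E F(X_t) = -\sum_{k\geq 2}\frac{1}{2\cdot k!}\sum_{a,b}\kappa_{k+1}(\hat W_{ab})\,\E\big[\partial_{ab}^{k+1} F(X_t)\big] + o(1),
\end{equation*}
and since $|\kappa_{k+1}(\hat W_{ab})|\lesssim N^{-(k+1)/2}$, the $N^2$ sum over $(a,b)$ produces a prefactor $N^{(3-k)/2}$, maximized at $k=2$ where it equals $N^{1/2}$, matching the exponent in \eqref{eq:borneed}.

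The remaining task, and the main obstacle, is to bound the mixed derivatives $\partial_{ab}^{k+1} F(X_t)$ uniformly in $(a,b)$. By smoothness of $F$ and the chain rule this reduces to bounding high-order derivatives of the rescaled multi-resolvent traces $X_t^{(j)}$; since $\partial_{ab} G_t = -G_t E^{ab} G_t$, each derivative introduces an additional resolvent factor together with two free indices. After executing the sum over $(a,b)$ via the Ward identity $G G^* = \Im G/\eta$, the resulting multi-resolvent averages are controlled by the local laws of \cites{cipolloni2021eigenstate, cipolloni2022rank} encoded on $\Omega_2$ in \eqref{eq:defom2}, which exploit the tracelessness of each $A_i$ to produce the normalization $C_k$. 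The genuine difficulty is that $\eta = N^{-1-\zeta}$ sits \emph{below} the typical eigenvalue spacing, so naive resolvent bounds blow up like $1/\eta = N^{1+\zeta}$; this is circumvented by the bulk level repulsion estimates of \cite{knowles2013eigenvector}, which guarantee with very high probability that no eigenvalue is too close to $E_i$, so each factor $1/\eta$ appearing through the derivative chain can be absorbed into a harmless $N^{C\zeta}$ loss. Integrating over $[0,t]$ and combining with the $N^{1/2}$ prefactor from the dominant $k=2$ cumulant term yields the claimed bound $\lesssim t\, N^{1/2+C\zeta}$.
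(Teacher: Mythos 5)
Your skeleton is the same as the paper's: It\^o's formula along the OU flow \eqref{eq:OU}, a cumulant expansion in which the second-order term cancels the quadratic variation (moment matching), and the surviving third-order cumulants of size $N^{-3/2}$ summed over $N^2$ index pairs producing the prefactor $N^{1/2}$, with all derivatives of the resolvent chains bounded by $N^{C\zeta}$. The gap is in the one step that is actually delicate here, namely justifying the multi-resolvent bounds at $\eta=N^{-1-\zeta}$, \emph{below} the eigenvalue spacing. Your appeal to level repulsion ``with very high probability that no eigenvalue is too close to $E_i$'' does not work: level repulsion controls gaps between eigenvalues and holds only with probability $1-N^{-c}$ for some small $c>0$, and the event that no eigenvalue lies within $N^{-1-\zeta}$ of a \emph{fixed} bulk energy fails with probability of order $N^{-\zeta}$ (this is just the local density, not a very-high-probability statement). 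On the complementary event the high-order derivatives $\partial_{\alpha_1}\cdots\partial_{\alpha_l}R_t$ contain many resolvent factors at scale $\eta$ and are polynomially large in $1/\eta=N^{1+\zeta}$, so a bad event of probability $N^{-\zeta}$ cannot simply be discarded inside the expectation; a naive truncation loses. Note also that in the paper level repulsion from \cite{knowles2013eigenvector} enters only in the reduction of Theorem~\ref{theo:main} to Proposition~\ref{pro:GFT} (replacing eigenvector overlaps by $\Im G$-chains at scale $N^{-1-\zeta}$), not inside the proof of the proposition itself.

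The paper's mechanism at the sub-spacing scale is deterministic and you should use it (or something equally robust): write
\[
G(E+\I\eta)=G(E+\I\eta_*)-\int_\eta^{\eta_*}G(E+\I\tau)^2\,\d\tau,\qquad \eta_*:=N^{-1+\epsilon},
\]
apply Cauchy--Schwarz to the integrand inside the isotropic chain, and use the monotonicity of $\tau\mapsto\tau\langle {\bm v},\Im G(E+\I\tau){\bm v}\rangle$ to pull all resolvents up to the scale $\eta_*$, where the multi-resolvent isotropic local law \cite{cipolloni2022rank}*{Corollary 2.4} (encoded in \eqref{eq:defom2}) applies; the total loss is a factor $\eta_*/\eta\le N^{C\zeta}$, giving exactly \eqref{eq:iso}. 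With \eqref{eq:iso} in hand, each derivative of the rescaled traces is bounded by $N^{C\zeta}$ uniformly in the indices, and your cumulant counting then yields $|\E\frac{\d}{\d t}R_t|\lesssim N^{1/2+C\zeta}$ and, after integration, \eqref{eq:borneed}. As written, however, the level-repulsion step is a genuine gap.
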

\begin{proof}

The proof of this proposition is similar to \cite{cipolloni2022rank}*{Appendix B}, we thus only present an outline of it. To keep the notation short we define
\[
R_t:= F\left(N^{-(k-2)/2}C_k^{-1} \langle \Im G_t(z_1) A_1\dots \Im G_t(z_k)A_k\rangle   \right)_{k\in [p]}.
\]
Then, by It\^{o}'s formula, we compute
\begin{equation}
\label{eq:needexp}
\E\frac{\d}{\d t} R_t=\E\left[-\frac{1}{2} \sum_\alpha w_\alpha(t) \partial_\alpha R_t+\frac{1}{2}\sum_{\alpha,\beta} \kappa_t(\alpha,\beta) \partial_\alpha \partial_\beta R_t \right],
\end{equation}
where $\alpha,\beta\in [N]^2$ are double indices, $w_\alpha(t)$ are the entries of $\hat{W}_t$, and $\partial_\alpha:=\partial_{w_\alpha}$. Here
\[
\kappa_t(\alpha_1,\dots,\alpha_l):=\kappa(w_{\alpha_1}(t), \dots,w_{\alpha_l}(t))
\]
denotes the joint cumulant of $w_{\alpha_1}(t), \dots,w_{\alpha_l}(t)$, with $l\in\N$. Note that by \eqref{eq:momass} it follows $|\kappa_t(\alpha_1,\dots,\alpha_l)|\lesssim N^{-l/2}$ uniformly in $t\ge 0$. Performing a cumulant expansion in \eqref{eq:needexp} we are left with
\begin{equation}
\E\frac{\d}{\d t} R_t=\E\sum_{l=3}^L \sum_{\alpha_1,\dots,\alpha_l}\kappa_t(\alpha_1,\dots,\alpha_l) \partial_{\alpha_1}\cdots \partial_{\alpha_l}R_t+\Omega(L),
\end{equation}
with $\Omega(L)$ a negligible error.

Next, we note that by \cite{cipolloni2022rank}*{Corollary 2.4} it follows that
\begin{equation}
\label{eq:iso}
\big|\langle {\bm x}, G_t(z_1) A_1\dots  G_t(z_k)A_k G_t(z_{k+1}) {\bm y}\rangle \big| \lesssim N^{C\zeta} N^{k/2}\prod_{i\in [k]} \langle A_i^2\rangle^{1/2},
\end{equation}
for some constant $C>0$ possibly depending on $k$. We point out that in \cite{cipolloni2022rank}*{Corollary 2.4}  the bound \eqref{eq:iso} is proven for $\Im z_i\ge N^{-1+\epsilon}$, for any small $\epsilon>0$; the fact that it can be extended to $\Im z_i=\eta$, i.e. a bit below the level spacing $N^{-1}$, follows similarly to \cite{cipolloni2021edge}*{Appendix A}. We now show this considering a specific example, the general case is completely analogous and so omitted; we explain the details at the end of this section.

Finally, by \eqref{eq:iso} and $|(G)_{ab}|\le N^\zeta$ it readily follows that for any $k\in [p]$ we have
\begin{equation}
\left|\partial_{\alpha_1}\cdots \partial_{\alpha_l}N^{-(k-2)/2}C_k^{-1} \langle \Im G_t(z_1) A_1\dots \Im G_t(z_k)A_k\rangle\right|\le N^{C\zeta},
\end{equation}
for some large fixed constant $C>0$. We thus conclude that
\begin{equation}
\left|\E\frac{\d}{\d t} R_t\right|\lesssim \sum_{l=3}^L N^{-l/2+2} N^{C\zeta}= N^{1/2+C\zeta}.
\end{equation}
Integrating in time we conclude \eqref{eq:borneed}.

\end{proof}

\begin{proof}[Proof of \eqref{eq:iso}]

For simplicity we do not distinguish the indices, i.e. we write
\[
(GA)^l=G_t(z_1) A_1\dots  G_t(z_l)A_l.
\]
 Assume that $\Im z_l=\eta$ and $|\Im z_j|\ge\eta_*:= N^{-1+\epsilon}$ for $j\ne l$, then 
\begin{equation}
\begin{split}
\langle {\bm x}, (GA)^{l-1}G(z_l)A(GA)^{k-l} G{\bm y}\rangle &=\langle {\bm x}, (GA)^{l-1}G(E_l+\I \eta_*)A(GA)^{k-l} G {\bm y}\rangle \\
&\quad-\int_\eta^{\eta_*}\langle {\bm x}, (GA)^{l-1}G(E_{q_0}+\I \tau)^2A(GA)^{k-l}G {\bm y}\rangle\,\d\tau.
\end{split}
\end{equation}
For the first term in the rhs. we use the local law from  \cite{cipolloni2022rank}*{Corollary 2.4} obtaining a bound as in \eqref{eq:iso}. For the second term we estimate (neglecting $\log N$--factors)
\begin{equation}
\begin{split}
&\int_\eta^{\eta_*}\langle {\bm x}, (GA)^{l-1}G(E_l+\I \tau)^2A(GA)^{k-l}G {\bm y}\rangle\,\d\tau \\
&\lesssim \int_\eta^{\eta_*}\frac{1}{\tau}\langle {\bm x}, (GA)^{l-1}\Im G(E_l+\I \tau)((GA)^{l-1})^*{\bm x}\rangle^{1/2}  \langle {\bm y}, (A(GA)^{k-l}G)^*\Im G(E_l+\I \tau)A(GA)^{k-l}G{\bm y}\rangle^{1/2} \, \d\tau\\
&\lesssim  \frac{\eta_*}{\eta} \int_\eta^{\eta_*}\frac{1}{\tau}\langle {\bm x}, (GA)^{l-1}\Im G(E_l+\I \eta_*)((GA)^{l-1})^*{\bm x}\rangle^{1/2}\langle {\bm y}, (A(GA)^{k-l}G)^*\Im G(E_l+\I \eta_*)A(GA)^{k-l}G{\bm y}\rangle^{1/2} \, \d\tau\\
&\lesssim \frac{\eta_*}{\eta} \sqrt{N^{l-1} N^{k-l+1}\prod_{i\in [k]} \langle A_i^2\rangle}\lesssim N^{\zeta} N^{k/2}\prod_{i\in [k]} \langle A_i^2\rangle^{1/2}.
\end{split}
\end{equation}
This proves \eqref{eq:iso} choosing $\epsilon\le \zeta$. We remark that in the second inequality we used the monotonicity of the map $\tau\mapsto \tau \langle {\bm v}, \Im G(E+\I\tau) {\bm w}\rangle$, and that in the third inequality we used the local law from  \cite{cipolloni2022rank}*{Corollary 2.4}.

\end{proof}

\bibliography{mass}
\bibliographystyle{abbrv}
\end{document}